\newcolumntype{R}[1]{>{\raggedleft\arraybackslash }b{#1}}
\newcolumntype{L}[1]{>{\raggedright\arraybackslash }b{#1}}
\newcolumntype{C}[1]{>{\centering\arraybackslash }b{#1}}
\def\NAT@def@citea{\def\@citea{\NAT@separator}}% Suppress spaces between citations using natbib.sty
\theoremstyle{plain}% Theorem-like structures provided by amsthm.sty
\newtheorem{theorem}{Theorem}[section]
\newtheorem{lemma}[theorem]{Lemma}
\newtheorem{corollary}[theorem]{Corollary}
\newtheorem{proposition}[theorem]{Proposition}
\theoremstyle{definition}
\theoremstyle{remark}
\newcounter{cst}
\def\l{\lambda}
\def\d{{\rm d}}
\def\1{{\bf 1}}
\def\be{\begin{equation}}
\def\ee{\end{equation}}
\begin{document}

\articletype{Research Article}% Specify the article type or omit as appropriate

\title{Global asymptotic expansion for anisotropic singularly perturbed elliptic problems}

\author{
\name{David Maltese\textsuperscript{a} and Chokri Ogabi\textsuperscript{b}\thanks{\textsuperscript{b} Corresponding author. Email: Chokri.Ogabi@univ-eiffel.fr} }
\affil{\textsuperscript{a,b}LAMA, Univ. Gustave Eiffel, Univ. Paris Est Cr\'eteil, CNRS, F-77454 Marne-la-Vall\'ee, France.}
}

\maketitle

\begin{abstract}
In this paper, we study some anisotropic singular perturbations for a class of linear elliptic problems. We show a global asymptotic expansion of the solution in certain functional space.
\end{abstract}

\begin{keywords}
Asymptotic expansion, anisotropic singular perturbations, elliptic problem, global rate of convergence.
\end{keywords}
\section*{Introduction}

In this paper, we deal with anisotropic singular perturbations, which model diffusion phenomena in anisotropic medium. A prototype of such problems is given by the following bi-dimensional equation
\begin{equation} \label{prototype2d}
-\epsilon^{2} \partial_{x_1}^{2} u_{\epsilon}-\partial_{x_2}^{2} u_{\epsilon} =f \text{, with } u_{\epsilon}=0 \text{ on } \partial \Omega.
\end{equation}
The question of interest is whether the solution $u_{\epsilon}$  of equation (\ref{prototype2d}) can be considered, in an appropriate sense, as an approximation of a solution to the degenerate equation:  
  \begin{equation*} 
-\partial_{x_2}^{2} u_{0} =f \text{, with boundary condition.}
\end{equation*}
This perturbation is clearly singular, and addressing this question involves constructing an asymptotic approximation of the solution. The goal here is to obtain an approximation over the entire domain  $\Omega$ .\\
In the singular limit \( \epsilon \to 0 \), there is a loss of regularity, accompanied by the emergence of a boundary layer phenomenon along the lateral boundary of the domain. Obtaining such a global approximation is not always straightforward. For isotropic singular perturbations, some later works have shed light on this subject. Key references include the foundational works of Vi\'sik and Lyusternik \cite{visik}, as well as those of Knowles and Messick \cite{messick}, Eckhaus and DeJaeger \cite{eckaus}, and Comstock \cite{comstock}. 
The method used involves seeking the solution as the sum of two series. The first is a sum of interior solutions weighted by successive powers of \( \epsilon \), obtained by formally inserting a series \( \sum_{i} \epsilon^i u_i \) into equation (\ref{prototype2d}), and the second is a sum of terms associated with the boundary layer, constructed by applying a scaling transformation near the boundary of the domain.  
An equivalent approach using corrector methods is described in \cite{Lions}. \\
In general, these methods are relatively easy to implement for problems where the diffusion term is represented by a simple Laplacian and the domain has a simple geometry.  
In the anisotropic case, a global zero-order development was proposed in \cite{guesmia} for an equation of type (\ref{prototype2d}) by using corrector method. An interior asymptotic development for a more general case of equation (\ref{prototype2d}) was also studied in \cite{Azouz}. In \cite{ogabimaltese1} we have proved a zero order global approximation of order $\epsilon $ when $f$ is zero on the lateral boundary of, in this case the boundary layer part is vanish. Another global approximation of order $\epsilon^{\alpha}$, $0 < \alpha < 1$ for more general data was proved in \cite{ogabimaltese2}. For other related works, we refer the reader to \cite{ogabi2}, \cite{ogabi3}, \cite{ogabi1}.

In this article we will show some global asymptotic expansion in the case where boundary layers are vanished, the global asymptotic expansion using the methods mentioned above still open for equations of type (\ref{prototype2d}). 
To motivate our study, let us begin by an elementary example,  take $f(x_1,x_2)=\sin x_1 \sin x_2$ and $\Omega = (0,1)^2$ in (\ref{prototype2d}), then the exact solution is given by 
\begin{equation*}
u_{\epsilon}(x_1,x_2)=\frac{\sin x_1 \sin x_2}{1+\epsilon^2}=\sum_{k=0}^{\infty} \epsilon^{2k}(-1)^k \sin x_1 \sin x_2 ,
\end{equation*}
thus the following asymptotic expansion
\begin{equation*}
u_{\epsilon}=u_0+\epsilon^2 u_2+...+\epsilon^{2d} u_d +O(\epsilon^{2d+2}), \text{ } u_k(x_1,x_2)=(-1)^k \sin x_1 \sin x_2, \text{ } k=0,...,d,
\end{equation*}
holds in $H^1(\Omega)$. We will show in this paper that $u_{\epsilon}$ admits an asymptotic expansion  in some suitable functional space on the global domain $\Omega$. In \cite{Azouz} the authors studied the asymptotic expansion in some local functional space defined on $\Omega' \subset \Omega$, where $\Omega'$ is far from certain parts of $\partial \Omega $.
We will deal with the following general set-up of (\ref{prototype2d}), see for instance \cite{Chipot}.
\begin{equation}
-\text{div}(A_{\epsilon }\nabla u_{\epsilon })=f~\text{%
in}~\Omega ,  \label{u-perturbed}
\end{equation}%
supplemented with the boundary condition 
\begin{equation}
u_{\epsilon }=0~\text{in}~\partial \Omega .  \label{u-perturbed-bord}
\end{equation}%
Here, $\Omega =\omega _{1}\times \omega _{2}$ where $\omega _{1}$ and $%
\omega _{2}$ are two bounded open sets of $%
%TCIMACRO{\U{211d} }%
%BeginExpansion
\mathbb{R}
%EndExpansion
^{q}$ and $%
%TCIMACRO{\U{211d} }%
%BeginExpansion
\mathbb{R}
%EndExpansion
^{N-q},$ with $N>q\geq 1$. We denote by $%
x=(x_{1},...,x_{N})=(X_{1},X_{2})\in 
%TCIMACRO{\U{211d} }%
%BeginExpansion
\mathbb{R}
%EndExpansion
^{q}\times 
%TCIMACRO{\U{211d} }%
%BeginExpansion
\mathbb{R}
%EndExpansion
^{N-q}$ i.e. we split the coordinates into two parts. With this notation we
set%
\begin{equation*}
\nabla =(\partial _{x_{1}},...,\partial _{x_{N}})^{T}=\binom{\nabla _{X_{1}}%
}{\nabla _{X_{2}}},\text{ }
\end{equation*}%
where 
\begin{equation*}
\nabla _{X_{1}}=(\partial _{x_{1}},...,\partial _{x_{q}})^{T}\text{ and }%
\nabla _{X_{2}}=(\partial _{x_{q+1}},...,\partial _{x_{N}})^{T}.
\end{equation*}%
The matrix-valued function $A=(a_{ij})_{1\leq i,j\leq N}:\Omega \rightarrow \mathcal{M}%
_{N}(\mathbb{R})$ satisfies the classical ellipticity assumptions
\begin{itemize}
\item There exists $\lambda > 0$ such that for a.e. $x \in \Omega$ 
\begin{equation}
A(x)\xi \cdot \xi \geq \lambda \left\vert \xi \right\vert ^{2}~\text{for any}~
\xi \in 
%TCIMACRO{\U{211d} }%
%BeginExpansion
\mathbb{R}
%EndExpansion
^{N}.  \label{hypA1}
\end{equation}

\item The elements of $A$ are bounded that is 
\begin{equation}  \label{hypA2}
a_{ij}\in L^{\infty }(\Omega )~\text{for any}~ (i,j) \in \{1,2,....,N\}^2.
\end{equation}
\end{itemize}
We have decomposed $A$ into four blocks
\begin{equation*}
A=%
\begin{pmatrix}
A_{11} & A_{12} \\ 
A_{21} & A_{22}%
\end{pmatrix},
\end{equation*}
where $A_{11}$, $A_{22}$ are $q\times q$ and $(N-q)\times (N-q)$ matrices
respectively. Notice that (\ref{hypA1}) implies that $A_{22}$ satisfies the ellipticity assumption
\begin{equation}
\text{ For a.e. } x \in \Omega: A_{22}(x)\xi_2 \cdot \xi_2 \geq \lambda \left\vert \xi_2 \right\vert ^{2}~\text{for any}~
\xi_2 \in 
%TCIMACRO{\U{211d} }%
%BeginExpansion
\mathbb{R}
%EndExpansion
^{N-q}.  \label{hypA_221}
\end{equation}
For $\epsilon \in (0,1]$, we have set 
\begin{equation*}
A_{\epsilon }= 
\begin{pmatrix}
\epsilon ^{2}A_{11} & \epsilon A_{12} \\ 
\epsilon A_{21} & A_{22}%
\end{pmatrix}.%
\end{equation*}%
For the estimation of the global rate of convergence for the continuous problem, we suppose the following additional assumptions 
\begin{equation}
\text{ The block }A_{22}\text{ depends only on }X_{2},  \label{A22}
\end{equation}
and 
\begin{equation}
\partial _{i}a_{ij}\in L^{\infty }(\Omega ),\partial _{j}a_{ij}\in
L^{\infty }(\Omega )\text{ for }i=1,...,q\text{ and }j=q+1,...,N.
\label{hypAd1}
\end{equation}
The weak formulation of the problem ($\ref{u-perturbed}$)-($\ref{u-perturbed-bord}$%
) is 
\begin{equation}
\left\{ 
\begin{array}{l}
\int_{\Omega }A_{\epsilon
}\nabla u_{\epsilon }\cdot \nabla \varphi \d x=\int_{\Omega }f\text{ }\varphi
\d x, \quad \forall \varphi \in H_{0}^{1}(\Omega )\text{\ \ \ \  }
\\ 
u_{\epsilon }\in H_{0}^{1}(\Omega ),\text{\ \ \ \ \ \ \ \ \ \ \ \ \ \ \ \ \ \
\ \ \ \ \ \ \ \ \ \ \ \ \ \ \ \ \ \ \ \ \ \ \ \ \ \ \ \ \ \ \ \ \ \ \ \ \ \
\ \ \ \ \ \ \ \ \ \ \ \ \ \ \ \ \ \ \ \ \ \ \ \ \ \ \ }%
\end{array}%
\right.   \label{u-perturbed-weak}
\end{equation}%
where the existence and uniqueness is a consequence of the assumptions $(\ref%
{hypA1})$, $(\ref{hypA2})$.
The limit problem is given by
\begin{equation}
-\text{div}_{X_{2}}(A_{22}\nabla u)=f~\text{in}~\Omega ,
\label{u-limit}
\end{equation}
supplemented with the boundary condition 
\begin{equation}
u(X_{1},\cdot )=0~\text{in}~\partial \omega _{2},~\text{for}~X_{1}\in \omega
_{1}.  \label{u-limit-bord}
\end{equation}
We recall the Hilbert spaces \cite{ogabimaltese1}
$$H_{0}^{1}(\Omega ;\omega _{2})=\left\{\begin{array}{cc} v\in L^{2}(\Omega )~\text{such that}%
~\nabla _{X_{2}}v\in L^{2}(\Omega )^{N-q} \\
\text{ and for a.e. } ~X_{1}\in
\omega _{1},~v(X_{1},\cdot )\in H_{0}^{1}(\omega _{2})
\end{array}\right\}, $$ 
$$H_{0}^{1}(\Omega ;\omega _{1})=\left\{\begin{array}{cc} v\in L^{2}(\Omega )~\text{such that}%
~\nabla _{X_{1}}v\in L^{2}(\Omega )^{q} \\
\text{ and for a.e. } ~X_{2}\in
\omega _{2},~v(\cdot,X_{2} )\in H_{0}^{1}(\omega _{1})
\end{array}\right\}, $$ 
equipped with the norms $\left\Vert \nabla _{X_{2}}(\cdot )\right\Vert
_{L^{2}(\Omega )^{N-q}}$, $\left\Vert \nabla _{X_{1}}(\cdot )\right\Vert
_{L^{2}(\Omega )^{q}}$ respectively. Notice that these norms are equivalent to 
\begin{equation*}
\left( \left\Vert (\cdot )\right\Vert _{L^{2}(\Omega )}^{2}+\left\Vert
\nabla _{X_{2}}(\cdot )\right\Vert _{L^{2}(\Omega )^{N-q}}^{2}\right) ^{1/2},\text{ and } \left( \left\Vert (\cdot )\right\Vert _{L^{2}(\Omega )}^{2}+\left\Vert
\nabla _{X_{1}}(\cdot )\right\Vert _{L^{2}(\Omega )^{q}}^{2}\right) ^{1/2},
\end{equation*}%
thanks to Poincar\'{e}'s inequality 
\begin{equation}
\left\Vert v\right\Vert _{L^{2}(\Omega )}\leq C_{\omega _{i}}\left\Vert
\nabla _{X_{i}}v\right\Vert _{L^{2}(\Omega )},~\text{for any}~v\in
H_{0}^{1}(\Omega ;\omega _{i}), \text{ } i=1,2. \label{sob}
\end{equation}%
The space $H_{0}^{1}(\Omega )$ will be normed by $\left\Vert \nabla (\cdot
)\right\Vert _{L^{2}(\Omega )^{N}}$. One can check immediately that the
embedding $H_{0}^{1}(\Omega )\hookrightarrow H_{0}^{1}(\Omega ,\omega _{2})$
is continuous. The weak formulation of the limit problem (\ref{u-limit})-(\ref{u-limit-bord}) is given by 
\begin{equation}
\left\{ 
\begin{array}{ll}
\begin{array}{c}
\int_{\omega
_{2}}A_{22}\mathbf{(}X_{1},\cdot )\nabla _{X_{2}}u\mathbf{(}X_{1},\cdot
)\cdot \nabla _{X_{2}}\psi dX_{2} 
=\int_{\omega _{2}}f\mathbf{(}X_{1},\cdot )\text{ }\psi dX_{2}\text{, }%
\forall \psi \in H_{0}^{1}(\omega _{2})\text{\ \ \ \ }%
\end{array}
&  \\ 
u\mathbf{(}X_{1},\cdot )\in H_{0}^{1}(\omega _{2})\text{,\ for a.e. }%
X_{1}\in \omega _{1}\text{, \ \ \ \ \ \ \ \ \ \ \ \ \ \ \ \ \ \ \ \ \ \ \ \ \
\ \ \ \ \ \ \ \ \ \ \ \ \ \ \ \ \ \ \ \ \ \ \ \ \ \ \ \ \ \ \ \ \ \ \ \ } & 
\text{ }%
\end{array}%
\right.   
\label{u-limit-weak-bis}
\end{equation}%
where the existence and uniqueness is a consequence of the assumptions (\ref{hypA2}),(\ref{hypA_221}).
Recall that we have $u\in H_{0}^{1}(\Omega ;\omega _{2})$ and $ u_{\epsilon }\rightarrow u\text{ in }H_{0}^{1}(\Omega ;\omega _{2})$ as $\epsilon \rightarrow 0$ \cite{ogabimaltese1}.
Notice that for $\varphi \in H_{0}^{1}(\Omega ;\omega _{2})$, then for a.e $%
X_{1}$ in $\omega _{1}$ we have $\varphi (X_{1},\cdot )\in H_{0}^{1}(\omega
_{2})$, testing with it in (\ref{u-limit-weak-bis}) and integrating over $%
\omega _{1}$ yields 
\begin{equation}
\int_{\Omega }A_{22}\nabla _{X_{2}}u\cdot
\nabla _{X_{2}}\varphi dx=\int_{\Omega }f\text{ }\varphi dx,\text{\ }\forall
\varphi \in H_{0}^{1}(\Omega ;\omega _{2}).  \label{u-limit-weak}
\end{equation}
Notice that (\ref{u-limit-weak}) could be seen as the weak formulation of the limit problem (\ref{u-limit})-(\ref{u-limit-bord}) in the Hilbert space $H_{0}^{1}(\Omega ;\omega _{2})$ (thanks to Poincaré's inequality (\ref{sob})).\\
For the estimation of the global rate of convergence, let us recall this our previous result.
\begin{theorem} \label{thm-tauxconvergence}
\cite{ogabimaltese1} Let $\Omega =\omega _{1}\times \omega _{2}$ where $\omega
_{1}$ and $\omega _{2}$ are two bounded open sets of $%
%TCIMACRO{\U{211d} }%
%BeginExpansion
\mathbb{R}
%EndExpansion
^{q}$ and $%
%TCIMACRO{\U{211d} }%
%BeginExpansion
\mathbb{R}
%EndExpansion
^{N-q}$ respectively, with $N>q\geq 1.$ Suppose that $%
A $ satisfies $(\ref{hypA1})$, $(\ref{hypA2})$, $(\ref{A22})$ and $(\ref%
{hypAd1})$. Let $f\in H^1_0(\Omega,\omega_1 )$, then there exists $C_{\l,\Omega,A}>0$ such that: 
\begin{equation}
\label{tauxconvergence}
\forall \epsilon \in (0,1]:\left\Vert \nabla _{X_{2}}(u_{\epsilon
}-u)\right\Vert _{L^{2}(\Omega )^{N-q}}\leq C_{\l,\Omega,A}(\left\Vert \nabla _{X_{1}}f\right\Vert
_{L^{2}(\Omega )^{q}}+\left\Vert f\right\Vert _{L^{2}(\Omega )})\ \epsilon ,
\end{equation}%
where $u_{\epsilon }$ is the unique solution of $(\ref{u-perturbed-weak})$ in $%
H_{0}^{1}(\Omega )$ and $u$ is the unique solution to $(\ref{u-limit-weak})$
in $H_{0}^{1}(\Omega ;\omega _{2}),$  moreover we have $u \in H^1_0(\Omega).$
\end{theorem}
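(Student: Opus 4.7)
The overall plan is to test the difference $v=u_\epsilon-u$ against itself in an error identity and to estimate the three source terms produced by the off-diagonal and diagonal blocks $\epsilon^2 A_{11}$, $\epsilon A_{12}$, $\epsilon A_{21}$ of $A_\epsilon$. A preparatory regularity lift for the limit solution $u$ is necessary, because the test function $v$ has to live in $H_0^1(\Omega)$, while a priori $u$ is only known to lie in $H_0^1(\Omega;\omega_2)$.

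\textbf{Step 1 (regularity lift for $u$).} Since $A_{22}$ depends only on $X_2$ by (\ref{A22}) and $f\in H_0^1(\Omega;\omega_1)$, I would differentiate the fiberwise limit problem (\ref{u-limit-weak-bis}) in the $X_1$-directions (rigorously via difference quotients together with the linearity of the fiber solution map) to conclude that $\partial_{x_i}u\in L^2(\Omega)$ for $i=1,\dots,q$ and that $\partial_{x_i}u$ solves the same fiber problem with right-hand side $\partial_{x_i}f$. Combined with the fiberwise energy estimate and Poincar\'e's inequality (\ref{sob}) in $\omega_2$, this yields
\begin{equation*}
\|\nabla_{X_1}u\|_{L^2(\Omega)^{q}} + \|\nabla_{X_1}\nabla_{X_2}u\|_{L^2(\Omega)^{q(N-q)}} \le C(\lambda,\omega_2)\,\|\nabla_{X_1}f\|_{L^2(\Omega)^{q}}.
\end{equation*}
The vanishing trace on $\partial\omega_1\times\omega_2$ follows from $f|_{\partial\omega_1\times\omega_2}=0$ and uniqueness in the fiber problem, so $u\in H_0^1(\Omega)$.

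\textbf{Step 2 (error identity).} Since now $u\in H_0^1(\Omega)\subset H_0^1(\Omega;\omega_2)$, I subtract (\ref{u-limit-weak}) tested against $v\in H_0^1(\Omega)$ from (\ref{u-perturbed-weak}), and split $A_\epsilon\nabla u_\epsilon=A_\epsilon\nabla v+A_\epsilon\nabla u$ to arrive at
\begin{equation*}
\int_\Omega A_\epsilon\nabla v\cdot\nabla v = -\epsilon^2\!\int_\Omega\! A_{11}\nabla_{X_1}u\cdot\nabla_{X_1}v - \epsilon\!\int_\Omega\! A_{12}\nabla_{X_2}u\cdot\nabla_{X_1}v - \epsilon\!\int_\Omega\! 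A_{21}\nabla_{X_1}u\cdot\nabla_{X_2}v.
\end{equation*}
The block ellipticity $A_\epsilon\xi\cdot\xi\ge\lambda(\epsilon^2|\xi_1|^2+|\xi_2|^2)$, which follows from (\ref{hypA1}) after the substitution $\tilde\xi=(\epsilon\xi_1,\xi_2)$, bounds the left-hand side below by $\lambda\epsilon^2\|\nabla_{X_1}v\|^2+\lambda\|\nabla_{X_2}v\|^2$.

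\textbf{Step 3 (estimating the three terms; the main obstacle).} The first and third right-hand-side terms are routine: Cauchy--Schwarz together with the Step~1 bound $\|\nabla_{X_1}u\|_{L^2}\le C\|\nabla_{X_1}f\|_{L^2}$ and Young's inequality absorb them into $\tfrac{\lambda}{4}\epsilon^2\|\nabla_{X_1}v\|^2+\tfrac{\lambda}{4}\|\nabla_{X_2}v\|^2$ plus an $\epsilon^2\|\nabla_{X_1}f\|_{L^2}^2$ residue. The genuinely hard term is the middle one: a direct Cauchy--Schwarz bounds it only by $\|\nabla_{X_2}u\|\,\epsilon\|\nabla_{X_1}v\|$, producing a contribution of order $\|f\|_{L^2}$ rather than of order $\epsilon$ and so destroying the target rate. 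The remedy is to integrate by parts in the $X_1$-directions, legitimate because $v\in H_0^1(\Omega)$:
\begin{equation*}
\epsilon\!\int_\Omega\! A_{12}\nabla_{X_2}u\cdot\nabla_{X_1}v = -\epsilon\!\int_\Omega\sum_{i=1}^{q}\sum_{j=q+1}^{N}\bigl[(\partial_{x_i}a_{ij})\,\partial_{x_j}u + a_{ij}\,\partial_{x_i}\partial_{x_j}u\bigr]\,v.
\end{equation*}
Assumption (\ref{hypAd1}) gives $\partial_{x_i}a_{ij}\in L^\infty$, Step~1 provides $\partial_{x_i}\partial_{x_j}u\in L^2$ with $L^2$-norm controlled by $\|\nabla_{X_1}f\|_{L^2}$, and Poincar\'e (\ref{sob}) converts $\|v\|_{L^2}$ into $\|\nabla_{X_2}v\|_{L^2}$. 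The resulting bound $C\epsilon\bigl(\|f\|_{L^2}+\|\nabla_{X_1}f\|_{L^2}\bigr)\|\nabla_{X_2}v\|_{L^2}$ is absorbable into $\tfrac{\lambda}{4}\|\nabla_{X_2}v\|^2$ by Young, and combining the three estimates yields (\ref{tauxconvergence}).
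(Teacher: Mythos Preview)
The paper does not prove this theorem: it is quoted from \cite{ogabimaltese1} and only stated here as background (the ``moreover $u\in H_0^1(\Omega)$'' clause is then used freely, e.g.\ in the proof of Lemma~\ref{lem:regularity}). So there is no in-paper proof to compare against. That said, your argument is correct, and your Step~1 plus Step~3 strategy is precisely the kind of argument one expects for this estimate.

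It is worth noting that the paper, when it faces the identical obstacle later (the term $I_3$ in the proof of Theorem~\ref{thm-2}), handles the dangerous $\epsilon\!\int A_{12}\nabla_{X_2}u_d\cdot\nabla_{X_1}v$ contribution by a different integration-by-parts manoeuvre: instead of your direct integration by parts in $x_i$, which produces $\partial_{x_i}\partial_{x_j}u$ and therefore requires the mixed-derivative bound from your Step~1, the paper writes $a_{ij}\partial_{x_i}v=\partial_{x_i}(a_{ij}v)-v\,\partial_{x_i}a_{ij}$, then swaps $\int\partial_{x_i}(a_{ij}v)\,\partial_{x_j}u_d=\int\partial_{x_j}(a_{ij}v)\,\partial_{x_i}u_d$ by density, and expands back. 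That route uses both halves of (\ref{hypAd1}) (your route uses only $\partial_{x_i}a_{ij}\in L^\infty$) but in exchange needs only $\nabla_{X_1}u\in L^2$ rather than $\nabla_{X_1}\nabla_{X_2}u\in L^2$. Since your Step~1 delivers both regularity facts simultaneously via the same difference-quotient argument, the distinction is cosmetic here; your version is arguably the more transparent of the two.
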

The estimate (\ref{tauxconvergence}) could be seen as estimate for an asymptotic expansion of $u_{\epsilon}$ of order 0.
The aim of our study is to obtain analogous estimates for asymptotic expansion of order $\geq 1$. To obtain such a result we have suppose the following reularity asumption on $f$
We look for the expansion of $u_{\epsilon }$ into a formal series 
\begin{equation}
u_{\epsilon }=\sum_{k \geq 0}\epsilon ^{k}u_{k}.  \label{serie}
\end{equation}%
We replace formally (\ref{serie}) in (\ref{u-perturbed}) and we
identify the terms of the same powers, we obtain%
\begin{equation}
-\nabla _{X_{2}}\cdot (A_{22}\nabla _{X_{2}}u_{0})=f,  \label{u0}
\end{equation}%
and 
\begin{equation}
-\nabla _{X_{2}}\cdot (A_{22}\nabla _{X_{2}}u_{1})=\nabla _{X_{1}}\cdot
(A_{12}\nabla _{X_{2}}u_{0})+\nabla _{X_{2}}\cdot (A_{21}\nabla
_{X_{1}}u_{0}),  \label{u1}
\end{equation}%
and for every $k\geq 2:$%
\begin{multline}
-\nabla _{X_{2}}\cdot (A_{22}\nabla _{X_{2}}u_{k})=\nabla _{X_{1}}\cdot
(A_{12}\nabla _{X_{2}}u_{k-1}) \\
+\nabla _{X_{2}}\cdot (A_{21}\nabla
_{X_{1}}u_{k-1})+\nabla _{X_{1}}\cdot (A_{11}\nabla _{X_{1}}u_{k-2}).
\label{uk}
\end{multline}
To obtain a such asymptotic expansion the data of the problem must be regular enough, i.e. we assume that 
\begin{equation} \label{f_Hm0}
f\in H_{0}^{d+1}(\Omega ;\omega _{1}).
\end{equation}
Notice that (\ref{u0}), (\ref{u1}) and (\ref{uk}) are understood in the weak sense in the functional $H^1_0(\Omega, \omega_2)$ as in (\ref{u-limit-weak}).\\
For every multi-index $\alpha =(\alpha _{1},...,\alpha
_{q})\in 
%TCIMACRO{\U{2115} }%
%BeginExpansion
\mathbb{N}
%EndExpansion
^{q}$ and $\beta =(\beta _{q+1},...,\beta
_{N})\in 
%TCIMACRO{\U{2115} }%
%BeginExpansion
\mathbb{N}
%EndExpansion
^{N-q}$, we denote $$D_{X_{1}}^{\alpha }:=\frac{%
\partial ^{\alpha _{1}....\alpha _{q}}}{\partial x_{1}^{\alpha
_{1}}...\partial x_{q}^{\alpha _{q}}}, \text{ and } D_{X_{2}}^{\beta }:=\frac{%
\partial ^{\beta _{q+1}....\beta _{N}}}{\partial x_{q+1}^{\beta
_{q+1}}...\partial x_{N}^{\beta _{N}}}, $$
and for every $1 \leq i \leq q$ and $q+1 \leq j \leq N$ we denote generically $$ D^1_{X_1}:=\frac{\partial_i}{\partial x_i} \text{ and }D^1_{X_2}:=\frac{\partial_j}{\partial x_j} .$$
For any $m\in 
%TCIMACRO{\U{2115} }%
%BeginExpansion
\mathbb{N}
%EndExpansion
^{\ast }$, we introduce the space,%
\begin{equation*}
H_{0}^{m}(\Omega ;\omega _{1})=\left\{\begin{array}{cc} v\in L^{2}(\Omega )\mid
D_{X_{1}}^{\alpha }v\in L^{2}(\Omega )\text{ for every }\alpha \in 
%TCIMACRO{\U{2115} }%
%BeginExpansion
\mathbb{N}
%EndExpansion
^{q}\text{ },\left\vert \alpha \right\vert
\leq m, \\ \text{ and for a.e. }X_{2}\in \omega _{2},u(\cdot ,X_{2})\in
H_{0}^{m}(\omega _{1})\end{array}\right\}.
\end{equation*}
We recall that $H_{0}^{m}(\omega _{1})$ is the closure of $\mathcal{D}%
(\omega _{1})$ in the Sobolev space $H^{m}(\omega _{1}).$ The space $H_{0}^{m}(\Omega ;\omega _{1})$ is
normed by 
\begin{equation*}
\left( \sum_{\substack{ \alpha \in 
%TCIMACRO{\U{2115} }%
%BeginExpansion
\mathbb{N}
%EndExpansion
^{q} \\ \left\vert \alpha \right\vert \leq m 
}}\left\Vert D_{X_{1}}^{\alpha } (\cdot) \right\Vert _{L^{2}(\Omega )}^{2}\right) ^{%
\frac{1}{2}},
\end{equation*}%
or with the equivalent norm
\begin{equation*}
 \left( \sum_{\substack{ \alpha \in 
%TCIMACRO{\U{2115} }%
%BeginExpansion
\mathbb{N}
%EndExpansion
^{q} \\ \left\vert \alpha \right\vert =m}}%
\left\Vert D_{X_{1}}^{\alpha }(\cdot) \right\Vert _{L^{2}(\Omega )}^{2}\right) ^{%
\frac{1}{2}}.
\end{equation*}
We remark that $H_{0}^{m}(\Omega ;\omega _{1}) \subset H_{0}^{1}(\Omega ;\omega _{1})$, and we can check that $H_{0}^{m}(\Omega ;\omega _{1})$ is a Hilbert space. \\
The article is organized as follows, 
 in section 1, we show the asymptotic expansion in the case when $A$ has a diagonal block structure. In section 2, we extend the result to the general case. We have also put the proofs of some density lemmas in the appendix. Throughout this manuscript, we denote $C_{f,A,...ect}$ a generic positive constant (not necessary the same each time) independent of $\epsilon$ and which depends on the objects $A$, $f$,...etc.
\section{The diagonal block structure }\label{sec1}
In this subsection, we suppose that $d \in \mathbb{N}^{*}$ to be even, and we assume that
\begin{equation}\label{block0}
 A_{12}=A_{21}=0.
\end{equation}
The equations (\ref{u0}), (\ref{u1}), and (\ref{uk}) read 
\begin{equation}
-\nabla _{X_{2}}\cdot (A_{22}\nabla _{X_{2}}u_{0})=f  \label{u0-bis},
\end{equation}%
and 
\begin{equation}
-\nabla _{X_{2}}\cdot (A_{22}\nabla _{X_{2}}u_{1})=0  \label{u1-bis},
\end{equation}%
and for every $k\geq 2:$%
\begin{equation}
-\nabla _{X_{2}}\cdot (A_{22}\nabla _{X_{2}}u_{k})=\nabla _{X_{1}}\cdot (A_{11}\nabla _{X_{1}}u_{k-2}).
\label{uk-bis}
\end{equation}
Let $d\in 
%TCIMACRO{\U{2115} }%
%BeginExpansion
\mathbb{N}^{*}
%EndExpansion
.$ We assume that: 
\begin{equation}
D_{X_{1}}^{\alpha }A_{11}\in L^{\infty }(\Omega ), \text{ for every } \alpha \in 
%TCIMACRO{\U{2115} }%
%BeginExpansion
\mathbb{N}
%EndExpansion
^{q} \text{ with } \left\vert \alpha \right\vert
\leq d,  \label{regularity -A11}
\end{equation}
then we have the following
\begin{theorem} \label{thm-1}
Assume that (\ref{hypA1}), (\ref{hypA2}), (\ref{A22}), (\ref{f_Hm0}), (\ref{block0}), and (\ref{regularity -A11}) are satisfied. There exit $u_0$, ...,$u_d$ in $H^1_0(\Omega)$ the unique solution to the elliptic system (\ref{u0-bis}), (\ref{u1-bis}), (\ref{uk-bis}) such that for 
every $\epsilon \in (0,1]$ 
\begin{equation*}
\left\Vert \nabla _{X_{2}}(u_{\epsilon }-\sum_{k=0}^{\frac{d}{2}}\epsilon
^{2k}u_{2k})\right\Vert _{L^{2}(\Omega )}\leq C \times \epsilon ^{d+1}, \text{ and } \left\Vert \nabla _{X_1}(u_{\epsilon }-\sum_{k=0}^{\frac{d}{2}}\epsilon
^{2k}u_{2k})\right\Vert _{L^{2}(\Omega )}\leq C \times \epsilon ^{d}.
\end{equation*}%
\end{theorem}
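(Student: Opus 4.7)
}

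The plan is to build the profiles $u_0,\ldots,u_d$ recursively, observe that all odd ones vanish (since $A_{12}=A_{21}=0$), and then perform a single energy estimate on the residual.

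\emph{Step 1: Construction and regularity of the profiles.} I would first solve (\ref{u0-bis}) in the sense of (\ref{u-limit-weak}) to obtain $u_0 \in H^1_0(\Omega;\omega_2)$, and then use the regularity of $f$ together with the fact that $A_{22}$ depends only on $X_2$ (assumption (\ref{A22})) to show by induction on $|\alpha|\le d+1$ that $D^{\alpha}_{X_1}u_0$ satisfies the same limit problem with right-hand side $D^{\alpha}_{X_1}f$; hence $u_0\in H^{d+1}_0(\Omega;\omega_1)$. Applying Theorem \ref{thm-tauxconvergence} (whose conclusion says in particular that the solution of the limit problem is in $H^1_0(\Omega)$ whenever its source is in $H^1_0(\Omega;\omega_1)$) gives $u_0 \in H^1_0(\Omega)$. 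Next, for (\ref{u1-bis}), uniqueness in $H^1_0(\Omega;\omega_2)$ immediately yields $u_1=0$, and the same argument gives $u_{2k+1}=0$ for every $k$. For the even profiles I would show by induction that $u_{2k}\in H^{d+1-2k}_0(\Omega;\omega_1)\cap H^1_0(\Omega)$; the inductive step uses (\ref{uk-bis}) with source $\nabla_{X_1}\cdot(A_{11}\nabla_{X_1}u_{2k-2})$, which belongs to $H^{d+1-2k}_0(\Omega;\omega_1)$ (it loses two $X_1$-derivatives) because of the bound on $D^{\alpha}_{X_1}A_{11}$ for $|\alpha|\le d$ in (\ref{regularity -A11}). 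As $k$ ranges up to $d/2$, the regularity index $d+1-2k$ stays $\ge 1$, which is exactly what is needed to invoke Theorem \ref{thm-tauxconvergence} once more to conclude $u_{2k}\in H^1_0(\Omega)$.

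\emph{Step 2: Residual identity.} Set $v_d:=\sum_{k=0}^{d/2}\epsilon^{2k}u_{2k}\in H^1_0(\Omega)$ and $r_\epsilon:=u_\epsilon-v_d\in H^1_0(\Omega)$. Plugging $v_d$ into $-\mathrm{div}(A_\epsilon\nabla\cdot)$, using (\ref{u0-bis})--(\ref{uk-bis}) for each $u_{2k}$, and re-indexing the resulting telescoping sum, the $X_2$-fluxes cancel with the $X_1$-fluxes except for one leftover term, so that
\begin{equation*}
-\mathrm{div}(A_\epsilon\nabla r_\epsilon)\;=\;\epsilon^{d+2}\,\nabla_{X_1}\!\cdot\!\bigl(A_{11}\nabla_{X_1}u_d\bigr)\quad\text{in }\mathcal{D}'(\Omega).
\end{equation*}
This identity makes sense weakly in $H^{-1}(\Omega)$ thanks to $u_d\in H^1_0(\Omega;\omega_1)$ and $A_{11}\in L^\infty$.

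\emph{Step 3: Energy estimate.} Testing the weak formulation against $\varphi=r_\epsilon$ and integrating the right-hand side by parts in $X_1$ gives
\begin{equation*}
\int_\Omega A_\epsilon\nabla r_\epsilon\cdot\nabla r_\epsilon\,\mathrm{d}x \;=\; -\epsilon^{d+2}\int_\Omega A_{11}\nabla_{X_1}u_d\cdot\nabla_{X_1}r_\epsilon\,\mathrm{d}x.
\end{equation*}
The coercivity (\ref{hypA1}) of $A_\epsilon$ in the diagonal block case bounds the left-hand side below by $\lambda(\epsilon^2\|\nabla_{X_1}r_\epsilon\|^2 + \|\nabla_{X_2}r_\epsilon\|^2)$, and a Young inequality on the right-hand side (writing $\epsilon^{d+2}= \epsilon^{d+1}\cdot\epsilon$ and absorbing $\tfrac{\lambda}{2}\epsilon^2\|\nabla_{X_1}r_\epsilon\|^2$ into the left) yields
\begin{equation*}
\tfrac{\lambda}{2}\epsilon^2\|\nabla_{X_1}r_\epsilon\|_{L^2}^2+\lambda\|\nabla_{X_2}r_\epsilon\|_{L^2}^2 \;\le\; C\,\epsilon^{2d+2}\|\nabla_{X_1}u_d\|_{L^2}^2,
\end{equation*}
from which the two advertised bounds in $\epsilon^d$ and $\epsilon^{d+1}$ follow at once.

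\emph{Main obstacle.} The energy step is short and standard; the real work is in Step 1, namely propagating $X_1$-regularity through the cascade and checking at each level that $u_{2k}$ upgrades from $H^1_0(\Omega;\omega_2)$ to the full $H^1_0(\Omega)$ via Theorem \ref{thm-tauxconvergence}. This is where the assumption $f\in H^{d+1}_0(\Omega;\omega_1)$ together with the $d$-order $X_1$-regularity of $A_{11}$ in (\ref{regularity -A11}) is consumed exactly---no regularity to spare---so one has to track the indices carefully. I would likely package the $X_1$-regularity transfer as a short auxiliary lemma (possibly using a density argument of the type already deferred to the appendix) before entering the induction.
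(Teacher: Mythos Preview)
Your proposal is correct and follows essentially the same route as the paper: the paper packages Step~1 as Lemma~\ref{lem:regularity} (proved via tensor products $H_0^m(\omega_1)\otimes H_0^m(\omega_2)$ and density, exactly the ``density argument of the type already deferred to the appendix'' you anticipate) together with Corollary~\ref{coro:regularity} for the inductive cascade, and then Steps~2--3 are carried out verbatim as you describe, testing the residual identity with $r_\epsilon$ and applying ellipticity plus Young's inequality to reach $\epsilon^{2d+2}$ on the right-hand side. The only cosmetic difference is that the paper's regularity lemma is phrased abstractly for a generic source $g\in H_0^m(\Omega;\omega_1)$ rather than as ``differentiate the equation in $X_1$,'' but the content and the index bookkeeping are the same.
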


\subsection{\textbf{Regularity of the $u_{k}$}}
We have the following lemma
\begin{lemma}
\label{lem:regularity}Let $m\in 
%TCIMACRO{\U{2115} }%
%BeginExpansion
\mathbb{N}
%EndExpansion
^{\ast }.$ Let $g\in H_{0}^{m}(\Omega ;\omega _{1}).$ Assume (\ref{hypA1}), (\ref{hypA2})), (\ref{A22}),  (\ref{regularity -A11}), and (\ref{f_Hm0}). Let $w_{g}\in H_{0}^{1}(\Omega ;\omega _{2})$
be the unique solution of 
\begin{equation}
\int_{\Omega }A_{22}\nabla _{X_{2}}w_{g}\cdot \nabla _{X_{2}}\varphi
dx=\int_{\Omega }g\varphi dx,\text{ }\forall \varphi \in H_{0}^{1}(\Omega
;\omega _{2}),  \label{regul1}
\end{equation}%
then $w_{g}\in H_{0}^{m}(\Omega ;\omega _{1}) \cap H^1_0(\Omega)$.
\end{lemma}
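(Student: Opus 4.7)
The plan is to exploit the structural assumption (\ref{A22}) that $A_{22}=A_{22}(X_2)$ is independent of $X_1$, so that the elliptic operator $-\nabla_{X_2}\cdot(A_{22}\nabla_{X_2}\cdot)$ commutes with any differentiation $D_{X_1}^\alpha$ in the $X_1$ variables. Formally, applying $D_{X_1}^\alpha$ to both sides of (\ref{regul1}) yields that $v_\alpha:=D_{X_1}^\alpha w_g$ should satisfy, in the weak sense,
\begin{equation*}
\int_\Omega A_{22}\nabla_{X_2} v_\alpha\cdot\nabla_{X_2}\varphi\,dx=\int_\Omega (D_{X_1}^\alpha g)\,\varphi\,dx,\quad\forall\,\varphi\in H_0^1(\Omega;\omega_2).
\end{equation*}
Since $D_{X_1}^\alpha g\in L^2(\Omega)$ for $|\alpha|\le m$ by hypothesis, the existence/uniqueness theory behind (\ref{regul1}) produces such a $v_\alpha\in H_0^1(\Omega;\omega_2)$, and uniqueness will force $v_\alpha=D_{X_1}^\alpha w_g$ once the differentiation is rigorously justified.

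To justify this differentiation, I would proceed by induction on $m$ using difference quotients in $X_1$. For $i\in\{1,\dots,q\}$ and small $h\neq 0$, set $D_h^i v(x):=h^{-1}(v(x+he_i)-v(x))$. Shifting (\ref{regul1}) by $he_i$ and using the $X_1$-independence of $A_{22}$ yields the analogous equation for $D_h^i w_g$ with right-hand side $D_h^i g$. Testing with $\varphi=D_h^i w_g$, the ellipticity (\ref{hypA_221}) of $A_{22}$, Poincar\'e's inequality (\ref{sob}) in $\omega_2$, and the classical difference-quotient bound $\|D_h^i g\|_{L^2}\le\|\partial_{x_i}g\|_{L^2}$ give
\begin{equation*}
\lambda\|\nabla_{X_2}D_h^i w_g\|_{L^2}^2\le\|D_h^i g\|_{L^2}\|D_h^i w_g\|_{L^2}\le C\,\|\nabla_{X_1}g\|_{L^2}\|\nabla_{X_2}D_h^i w_g\|_{L^2},
\end{equation*}
hence a uniform-in-$h$ bound. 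Passing to the limit $h\to 0$ identifies $\partial_{x_i}w_g\in L^2(\Omega)$ as a weak solution of the differentiated problem. Iterating the argument, at each step with new data $D_{X_1}^{|\alpha|-1}g\in H_0^1(\Omega;\omega_1)$ (which is legitimate since $g\in H_0^m(\Omega;\omega_1)$), produces $D_{X_1}^\alpha w_g\in L^2(\Omega)$ for every $|\alpha|\le m$, and the difference-quotient construction preserves the vanishing trace on $\partial\omega_1$, giving $w_g\in H_0^m(\Omega;\omega_1)$.

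For the final claim $w_g\in H_0^1(\Omega)$: the above applied with $m\ge 1$ delivers $\nabla_{X_1}w_g\in L^2(\Omega)^q$, which together with $\nabla_{X_2}w_g\in L^2(\Omega)^{N-q}$ (from $w_g\in H_0^1(\Omega;\omega_2)$) yields $w_g\in H^1(\Omega)$; the trace vanishes on $\partial\omega_1\times\omega_2$ because $w_g(\cdot,X_2)\in H_0^1(\omega_1)$ for a.e.\ $X_2$, and on $\omega_1\times\partial\omega_2$ because $w_g\in H_0^1(\Omega;\omega_2)$. The main obstacle I foresee is the careful justification that $D_h^i w_g$ genuinely lies in $H_0^1(\Omega;\omega_2)$ and that the iterated difference quotients preserve the $H_0^m(\omega_1)$ trace-zero property at every step. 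This is where the density lemmas deferred to the appendix should do the technical work, presumably by showing that tensor-product test functions $\phi(X_1)\psi(X_2)$ with $\phi\in\mathcal{D}(\omega_1)$ and $\psi\in H_0^1(\omega_2)$ are dense in $H_0^1(\Omega;\omega_2)$, which renders the shift operation in $X_1$ transparent and legitimises the whole commutation argument.
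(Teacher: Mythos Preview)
Your difference-quotient argument is a valid and standard route to tangential regularity, but the paper takes a different one. It first treats tensor-product data $g=\sum_i g_{1,i}\otimes g_{2,i}\in H_0^m(\omega_1)\otimes H_0^m(\omega_2)$, for which the solution is explicitly $w_g=\sum_i g_{1,i}\otimes w_{2,i}$ (with each $w_{2,i}\in H_0^1(\omega_2)$ solving the $(N{-}q)$-dimensional problem with datum $g_{2,i}$); membership in $H_0^m(\Omega;\omega_1)$ and the estimate $\|D_{X_1}^\alpha w_g\|_{L^2}\le C\|D_{X_1}^\alpha g\|_{L^2}$ are then read off directly. The appendix lemmas establish that $H_0^m(\omega_1)\otimes H_0^m(\omega_2)$ is dense in $H_0^m(\Omega;\omega_1)$, and a Cauchy-sequence argument transfers the conclusion to general $g$. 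Thus the density is applied to approximate the \emph{data} $g$, not the test functions as you guessed; and $w_g\in H_0^1(\Omega)$ is not argued via traces but simply imported from Theorem~\ref{thm-tauxconvergence}. The obstacle you flagged is genuine for your route: inferring $w_g(\cdot,X_2)\in H_0^m(\omega_1)$ from uniform difference-quotient bounds typically goes through the zero-extension characterization of $H_0^m$, which requires boundary regularity of $\omega_1$ that the paper never assumes. The tensor-product approach sidesteps this entirely, since the approximate solutions $g_{1,i}\otimes w_{2,i}$ are manifestly in $H_0^m(\Omega;\omega_1)$ and completeness of that space does the rest.
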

We have this immediate result
\begin{corollary}
\label{coro:regularity}
Suppose that assumptions of Theorem \ref{thm-1} are satisfied, then the system $(\ref{u0-bis}),(\ref{u1-bis}%
),(\ref{uk-bis})$ has a unique solution in $(H_{0}^{1}(\Omega ;\omega
_{2}))^{d}$. Moreover we have $u_k\in
H_{0}^{1}(\Omega ),$ and $u_k\in H^{d-k+1}_0(\Omega,\omega_1) $ for $0\leq k \leq d $
\end{corollary}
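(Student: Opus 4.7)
The plan is to construct $u_0, u_1, \dots, u_d$ inductively, using Lemma~\ref{lem:regularity} as the regularity engine at each step and the hypothesis $f \in H_0^{d+1}(\Omega;\omega_1)$ as the initial regularity budget. First I would apply Lemma~\ref{lem:regularity} with $m = d+1$ and $g = f$ to obtain a unique $u_0 \in H_0^1(\Omega;\omega_2)$ solving $(\ref{u0-bis})$, which automatically lies in $H_0^{d+1}(\Omega;\omega_1) \cap H_0^1(\Omega)$. For $u_1$ the right-hand side of $(\ref{u1-bis})$ vanishes, so Lax--Milgram on $H_0^1(\omega_2)$ for a.e.\ $X_1$ forces $u_1 \equiv 0$, which trivially sits in every claimed space; the same observation propagates through $(\ref{uk-bis})$ to give $u_k \equiv 0$ for every odd $k$, so genuine work is only needed on the even indices.

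For even $k \geq 2$ I would proceed by induction with hypothesis $u_{k-2} \in H_0^{d-k+3}(\Omega;\omega_1) \cap H_0^1(\Omega)$. The crucial intermediate claim is that
\begin{equation*}
g_k := \nabla_{X_1} \cdot (A_{11} \nabla_{X_1} u_{k-2}) \in H_0^{d-k+1}(\Omega;\omega_1),
\end{equation*}
after which Lemma~\ref{lem:regularity} applied with $m = d-k+1$ yields a unique $u_k \in H_0^1(\Omega;\omega_2)$ solving $(\ref{uk-bis})$, together with $u_k \in H_0^{d-k+1}(\Omega;\omega_1) \cap H_0^1(\Omega)$, closing the induction. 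The $L^2$-control of $D_{X_1}^{\alpha} g_k$ for $|\alpha| \leq d-k+1$ is routine Leibniz bookkeeping: every term $D_{X_1}^{\beta} A_{11} \cdot D_{X_1}^{\gamma} u_{k-2}$ that appears satisfies $|\beta| \leq d-k+2 \leq d$, which activates $(\ref{regularity -A11})$, while $|\gamma| \leq d-k+3$ keeps $u_{k-2}$ inside its regularity class.

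The main obstacle will be verifying the zero-trace condition on $\partial \omega_1$ for $g_k(\cdot,X_2)$, that is, checking that the second-order $X_1$-operator $-\nabla_{X_1}\cdot(A_{11}\nabla_{X_1}\,\cdot\,)$ sends $H_0^m(\omega_1)$ into $H_0^{m-2}(\omega_1)$ when $A_{11}$ is merely $L^\infty$ with bounded $X_1$-derivatives of sufficient order. I would handle this by a density argument: for a.e.\ $X_2$, approximate $u_{k-2}(\cdot,X_2)$ in the $H^{d-k+3}(\omega_1)$-norm by a sequence $\phi_n \in \mathcal{D}(\omega_1)$, then observe that $\nabla_{X_1}\cdot(A_{11}(\cdot,X_2)\nabla_{X_1}\phi_n)$ has support contained in $\mathrm{supp}(\phi_n) \Subset \omega_1$ and is uniformly bounded in $H^{d-k+1}(\omega_1)$ thanks to $(\ref{regularity -A11})$, hence lies in $H_0^{d-k+1}(\omega_1)$; passing to the limit in this closed subspace concludes the claim. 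Once this trace-preservation step is secured, the uniqueness in $H_0^1(\Omega;\omega_2)$ and the $H_0^1(\Omega)$-membership for each $u_k$ are direct consequences of Lemma~\ref{lem:regularity}.
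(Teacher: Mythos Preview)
Your proposal is correct and follows essentially the same inductive strategy as the paper: base case $u_0$ via Lemma~\ref{lem:regularity} applied to $f$, all odd $u_k$ forced to vanish by uniqueness, and for even $k$ the step $u_{k-2}\in H_0^{d-k+3}(\Omega;\omega_1)\Rightarrow g_k\in H_0^{d-k+1}(\Omega;\omega_1)\Rightarrow u_k\in H_0^{d-k+1}(\Omega;\omega_1)\cap H_0^1(\Omega)$ via assumption~(\ref{regularity -A11}) and Lemma~\ref{lem:regularity}. In fact you supply more detail than the paper does---the Leibniz count and the compact-support/density argument for the trace condition on $g_k(\cdot,X_2)$ are points the paper simply asserts without justification.
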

\begin{proof}
The existence and the uniqueness of $u_k$ in $H^1_0(\Omega,\omega_2)$ follow from the Lax-Milgram theorem. For the regularity, we process by induction.

1) For $u_{0}:$  by using (\ref{f_Hm0}) and Lemma \ref{lem:regularity} we get immediately $H_{0}^{m}(\Omega ;\omega _{1})$,
the regularity follows from Lemma , we have $u_{0}\in H_{0}^{d+1}(\Omega ;\omega _{1}).$

2) For $u_{k}$ with $1 \leq k\leq d-1$ odd : From (\ref{u1-bis}) we get immediately $u_1=0$, and by using (\ref{uk-bis}) and induction we get $u_k=0$ for every $0 \leq k \leq d$ odd.

3) For $u_{k}$ with $2 \leq k\leq d-2$ even: Suppose $u_{k-2} \in H_{0}^{d-k+3}(\Omega ;\omega
_{1})$ then by using (\ref{regularity -A11}) we get $\nabla _{X_{1}}\cdot (A_{11}\nabla _{X_{1}}u_{k-2}) \in H_{0}^{d-k+1}(\Omega ;\omega
_{1})$, therefore by using Lemma \ref{lem:regularity} we obtain that $u_k \in H_{0}^{d-k+1}(\Omega ;\omega
_{1})$, finally by using 1) and induction we get the desired result.
\end{proof}
We will prove Lemma \ref{lem:regularity} in two steps. Foremost, existence and uniqueness of $w_g$ follow by Lax-Milgram theorem, and the affirmation $w_g \in H^1_0(\Omega)$ follows from Theorem \ref{thm-tauxconvergence}.  \\
\textbf{Step1:} \textit{The case $g\in H_{0}^{m}(\protect\omega _{1})\otimes
H_{0}^{m}(\protect\omega _{2})$
}
\begin{lemma}
\label{tensor} Suppose that $g\in H_{0}^{m}(\omega _{1})\otimes
H_{0}^{m}(\omega _{2}).$ Suppose that assumptions of Lemma \ref{lem:regularity} is satisfied. Let $w_{g}$ be
the unique solution of $(\ref{regul1})$ then $w_{g}\in H_{0}^{m}(\Omega
;\omega _{1})$ and $\left\Vert D_{X_{1}}^{\alpha }w_{g}\right\Vert
_{L^{2}(\Omega )}\leq C_{\omega _{2}}\left\Vert D_{X_{1}}^{\alpha
}g\right\Vert _{L^{2}(\Omega )}$ for every $\alpha \in 
%TCIMACRO{\U{2115} }%
%BeginExpansion
\mathbb{N}
%EndExpansion
^{q}$ $,\left\vert \alpha \right\vert \leq m,$
\end{lemma}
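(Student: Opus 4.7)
The plan is to exploit the fact that, since $A_{22}$ depends only on $X_{2}$ by assumption (\ref{A22}), the tensor-product structure of $g$ lets the problem decouple by separation of variables, reducing everything to a family of scalar elliptic problems in $\omega_{2}$ indexed by $X_{1}$.

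First, I would write $g$ as a finite sum
\begin{equation*}
g(X_{1},X_{2})=\sum_{i=1}^{p}\phi_{i}(X_{1})\psi_{i}(X_{2}),\qquad \phi_{i}\in H_{0}^{m}(\omega_{1}),\ \psi_{i}\in H_{0}^{m}(\omega_{2}),
\end{equation*}
which is allowed because $g$ lies in the algebraic tensor product. For each $i$, I would introduce $v_{i}\in H_{0}^{1}(\omega_{2})$ as the unique solution (by Lax--Milgram on $H_{0}^{1}(\omega_{2})$, using (\ref{hypA2}) and (\ref{hypA_221})) of the scalar problem
\begin{equation*}
-\nabla_{X_{2}}\cdot(A_{22}(X_{2})\nabla_{X_{2}}v_{i})=\psi_{i}\text{ in }\omega_{2},\qquad v_{i}=0\text{ on }\partial\omega_{2}.
\end{equation*}

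Second, I would set $w:=\sum_{i}\phi_{i}(X_{1})\,v_{i}(X_{2})$ and verify directly that it satisfies (\ref{regul1}): for $\varphi\in H_{0}^{1}(\Omega;\omega_{2})$ one has, by Fubini and the slice $\varphi(X_{1},\cdot)\in H_{0}^{1}(\omega_{2})$ for a.e.\ $X_{1}$,
\begin{equation*}
\int_{\Omega}A_{22}\nabla_{X_{2}}w\cdot\nabla_{X_{2}}\varphi\,dx=\sum_{i}\int_{\omega_{1}}\phi_{i}(X_{1})\!\int_{\omega_{2}}\!A_{22}\nabla_{X_{2}}v_{i}\cdot\nabla_{X_{2}}\varphi(X_{1},\cdot)\,dX_{2}\,dX_{1}=\int_{\Omega}g\varphi\,dx,
\end{equation*}
so by uniqueness $w_{g}=w$.

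Third, I would extract the regularity in $X_{1}$ from this explicit representation. For any multi-index $\alpha\in\mathbb{N}^{q}$ with $|\alpha|\leq m$,
\begin{equation*}
D_{X_{1}}^{\alpha}w_{g}=\sum_{i}(D_{X_{1}}^{\alpha}\phi_{i})(X_{1})\,v_{i}(X_{2})\in L^{2}(\Omega),
\end{equation*}
and for a.e.\ $X_{2}$ the slice $w_{g}(\cdot,X_{2})=\sum_{i}\phi_{i}\,v_{i}(X_{2})$ is a finite combination of $H_{0}^{m}(\omega_{1})$ functions, hence in $H_{0}^{m}(\omega_{1})$. Therefore $w_{g}\in H_{0}^{m}(\Omega;\omega_{1})$.

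Fourth, for the quantitative estimate I would observe that since $\phi_{i}$ is independent of $X_{2}$ and $A_{22}$ of $X_{1}$, the $X_{1}$-derivatives commute with the $X_{2}$-divergence, so
\begin{equation*}
-\nabla_{X_{2}}\cdot(A_{22}\nabla_{X_{2}}D_{X_{1}}^{\alpha}w_{g})=\sum_{i}D_{X_{1}}^{\alpha}\phi_{i}\cdot\psi_{i}=D_{X_{1}}^{\alpha}g,
\end{equation*}
in the weak sense on $H_{0}^{1}(\Omega;\omega_{2})$. Testing with $D_{X_{1}}^{\alpha}w_{g}$ itself and using ellipticity (\ref{hypA_221}) together with the Poincar\'e inequality (\ref{sob}) in the $X_{2}$ variable yields the bound $\|D_{X_{1}}^{\alpha}w_{g}\|_{L^{2}(\Omega)}\leq C_{\omega_{2}}\|D_{X_{1}}^{\alpha}g\|_{L^{2}(\Omega)}$. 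There is no real obstacle here beyond bookkeeping; the only mildly delicate point is justifying the commutation of $D_{X_{1}}^{\alpha}$ with the equation, which is immediate from the explicit separated form $w_{g}=\sum_{i}\phi_{i}v_{i}$ rather than from a formal distributional argument.
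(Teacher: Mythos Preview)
Your proof is correct and follows essentially the same strategy as the paper: build $w_{g}$ explicitly as a sum of separated solutions $\phi_{i}\otimes v_{i}$, read off the $X_{1}$-regularity from this form, and obtain the estimate by testing the differentiated equation with $D_{X_{1}}^{\alpha}w_{g}$ and applying ellipticity plus Poincar\'e. The only cosmetic difference is that you verify the weak formulation for general $\varphi\in H_{0}^{1}(\Omega;\omega_{2})$ directly via Fubini and the slice property, whereas the paper first checks it on tensor-product test functions $\varphi_{1}\otimes\varphi_{2}$ and then invokes the density of $H_{0}^{1}(\omega_{1})\otimes H_{0}^{1}(\omega_{2})$ in $H_{0}^{1}(\Omega;\omega_{2})$.
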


\begin{proof}
\ \ \\
1) Suppose $g=g_{1}\otimes g_{2}.$\\
Let $w_{2,g_{2}}\in H_{0}^{1}(\omega _{2})$ be the unique solution to 
\begin{equation*}
\int_{\omega _{2}}A_{22}\nabla _{X_{2}}w_{2,g_{2}}\cdot \nabla
_{X_{2}}\varphi _{2}dx_{2}=\int_{\omega _{2}}g_{2}\varphi _{2}dx_{2},\text{ }%
\forall \varphi _{2}\in H_{0}^{1}(\omega _{2}).
\end{equation*}
For every $\varphi _{1}\in H_{0}^{1}(\omega _{1})$ we have :%
\begin{eqnarray*}
\int_{\Omega }A_{22}\nabla _{X_{2}}(g_{1}\otimes w_{2,g_{2}})\cdot \nabla
_{X_{2}}(\varphi _{1}\otimes \varphi _{2})dx &=&\int_{\omega
_{1}}g_{1}\varphi _{1}\left( \int_{\omega _{2}}A_{22}\nabla
_{X_{2}}w_{2,g_{2}}\cdot \nabla _{X_{2}}\varphi _{2}dX_{2}\right) dX_{1} \\
&=&\int_{\omega _{1}}g_{1}\varphi _{1}\left( \int_{\omega _{2}}g_{2}\varphi
_{2}dX_{2}\right) dX_{1} \\
&=&\int_{\Omega }g\varphi _{1}\otimes \varphi _{2}dx.
\end{eqnarray*}
By density of $H_{0}^{1}(\omega _{1})\otimes H_{0}^{1}(\omega _{2})$ in $%
H_{0}^{1}(\Omega ;\omega _{2})$ ( see \cite{ogabimaltese1}), then for every $%
\varphi \in H_{0}^{1}(\Omega ;\omega _{2})$ we have
\begin{equation*}
\int_{\Omega }A_{22}\nabla _{X_{2}}(g_{1}\otimes w_{2,g_{2}})\cdot \nabla
_{X_{2}}\varphi dx=\int_{\Omega }g\varphi dx,
\end{equation*}%
whence we get $w_{g}=g_{1}\otimes w_{2,g_{2}}$ ( by uniqueness). Now, it is clear
that $w_{g}\in H_{0}^{m}(\Omega ;\omega _{1})$. Indeed, for$,\alpha \in 
%TCIMACRO{\U{2115} }%
%BeginExpansion
\mathbb{N}
%EndExpansion
^{q}\times $ $,\left\vert \alpha \right\vert \leq m$%
, we have $D_{X_{1}}^{\alpha }w_{g}=D_{X_{1}}^{\alpha }g_{1}\otimes
w_{2,g_{2}}\in L^{2}(\omega _{1})\otimes L^{2}(\omega _{2})\subset
L^{2}(\Omega ),$ and for every. $X_{2}\in \omega _{2}:w_{g}(\cdot
,X_{2})=w_{2,g_{2}}(X_{2})g_{1}(\cdot )\in H_{0}^{m}(\omega _{1}).$\\
2) Since every element $g$ of $H_{0}^{m}(\omega _{1})\otimes
H_{0}^{m}(\omega _{2})$ could be written as $g=\sum\limits_{i=1}^{l}g_{1,i}%
\otimes g_{2,i}$ for some $l\in 
%TCIMACRO{\U{2115} }%
%BeginExpansion
\mathbb{N}
%EndExpansion
^{\ast },$ with $g_{1,i}\in H_{0}^{m}(\omega _{1})$, $g_{2,i}\in
H_{0}^{m}(\omega _{2})$, $i=1,...,l,$ then we check immediately that 
\begin{equation*}
w_{g}=\sum\limits_{i=1}^{l}g_{1,i}\otimes w_{2,i},
\end{equation*}%
where $w_{2,i}$ are the solutions of 
\begin{equation*}
\int_{\omega _{2}}A_{22}\nabla _{X_{2}}w_{2,i}\cdot \nabla _{X_{2}}\varphi
_{2}dX_{2}=\int_{\omega _{2}}g_{2,i}\varphi _{2}dX_{2},\text{ }\forall
\varphi _{2}\in H_{0}^{1}(\omega _{2})\text{, }i=1,...,l.
\end{equation*}%
We have immediately $w_{g}\in H_{0}^{m}(\Omega ;\omega _{1})$, and 
\begin{equation}
\int_{\omega _{2}}A_{22}\nabla _{X_{2}}D_{X_{1}}^{\alpha }w_{g}\cdot \nabla _{X_{2}}\varphi
_{2}dx=\int_{\omega _{2}}D_{X_{1}}^{\alpha }g\varphi _{2}dx,\text{ }\forall
\varphi _{2}\in H_{0}^{1}(\Omega,\omega _{2}).
\label{faible-tensor}
\end{equation}%
It is clear that $D_{X_{1}}^{\alpha }w_{g}\in H_{0}^{1}(\Omega,\omega _{2})$, therefore testing in (\ref{faible-tensor}) we obtain
\begin{equation}\label{nablaDX1}
\left\Vert \nabla_{X_2} D_{X_{1}}^{\alpha }w_{g}\right\Vert _{L^{2}(\Omega )}  \leq C_{\omega _{2}}\left\Vert D_{X_{1}}^{\alpha }g\right\Vert _{L^{2}(\Omega
)},
\end{equation}
and by using(\ref{sob}) we obtain immediately 
\begin{equation*}
\left\Vert D_{X_{1}}^{\alpha }w_{g}\right\Vert _{L^{2}(\Omega )}  \leq C_{\omega _{2}}\left\Vert D_{X_{1}}^{\alpha }g\right\Vert _{L^{2}(\Omega
)}.
\end{equation*}
\end{proof}
\textbf{Step 2:} \textit{The case $g\in H_{0}^{m}(\Omega ;\protect\omega _{1})$}.\\
Let $g\in H_{0}^{m}(\Omega ;\omega _{1})$. By using the density of $H_{0}^{m}(\omega _{1})\otimes H_{0}^{m}(\omega _{2})$ in $H_{0}^{m}(\Omega ;\omega
_{1})$ (see Lemmas \ref{density1} and \ref{density2}), there exists a
sequence $g_{n}$ of $H_{0}^{m}(\omega _{1})\otimes H_{0}^{m}(\omega _{2})$
such that $\left\Vert D_{X_{1}}^{\alpha }(g_{n}-g)\right\Vert _{L^{2}(\Omega
)}\rightarrow 0$ as $n\rightarrow \infty ,$ for every $\alpha \in 
%TCIMACRO{\U{2115} }%
%BeginExpansion
\mathbb{N}
%EndExpansion
^{q}$ $,\left\vert \alpha \right\vert \leq m.$ 
According to Lemma \ref{tensor} we have $w_{g_{n}}\in H_{0}^{m}(\Omega
;\omega _{1})$, and 
\begin{equation*}
\forall \alpha \in 
%TCIMACRO{\U{2115} }%
%BeginExpansion
\mathbb{N}
%EndExpansion
^{q},\left\vert \alpha \right\vert \leq
m,\forall n\in 
%TCIMACRO{\U{2115} }%
%BeginExpansion
\mathbb{N}
%EndExpansion
:\left\Vert D_{X_{1}}^{\alpha }w_{g_{n}}\right\Vert _{L^{2}(\Omega )}\leq
C_{\omega _{2}}\left\Vert D_{X_{1}}^{\alpha }g_{n}\right\Vert _{L^{2}(\Omega
)}.
\end{equation*}
The linearity of (\ref{regul1}) show that,$:$%
\begin{equation*}
\forall \alpha \in 
%TCIMACRO{\U{2115} }%
%BeginExpansion
\mathbb{N}
%EndExpansion
^{q},\left\vert \alpha \right\vert \leq
m,\forall n,p\in 
%TCIMACRO{\U{2115} }%
%BeginExpansion
\mathbb{N}
%EndExpansion
:\left\Vert D_{X_{1}}^{\alpha }(w_{g_{n}}-w_{g_{p}})\right\Vert
_{L^{2}(\Omega )}\leq C_{\omega _{2}}\left\Vert D_{X_{1}}^{\alpha
}(g_{n}-g_{p})\right\Vert _{L^{2}(\Omega )}.
\end{equation*}%
Whence, since $(g_{n})$ \ is a Cauchy sequence in $H_{0}^{m}(\Omega ;\omega
_{1})$ then it follows from the above inequality that $(w_{g_{n}})$ is a
Cauchy sequence in $H_{0}^{m}(\Omega ;\omega _{1})$ and hence, by
completness of $H_{0}^{m}(\Omega ;\omega _{1})$ the sequence $(w_{g_{n}})$
convergences in $H_{0}^{m}(\Omega ;\omega _{1}).$ Moreover,\ for $\alpha $
fixed $\left\Vert D_{X_{1}}^{\alpha }(g_{n}-g)\right\Vert _{L^{2}(\Omega
)}\rightarrow 0$ implies $\left\Vert g_{n}-g\right\Vert _{L^{2}(\Omega
)}\rightarrow 0$, therefore, $w_{g_{n}}\rightarrow w_{g}$ in $%
H_{0}^{1}(\Omega ;\omega _{2}),$ in particular $w_{g_{n}}\rightarrow w_{g}$
in $L^{2}(\Omega )$. Whence, we get $w_{g}\in H_{0}^{m}(\Omega ;\omega _{1})$
and $\left\Vert D_{X_{1}}^{\alpha }(w_{g_{n}}-w_{g})\right\Vert
_{L^{2}(\Omega )}\rightarrow 0.$

\subsection{\textbf{Proof of Theorem \ref{thm-1}}}

By using (\ref{u-perturbed-weak}) we write, for every $\varphi \in
H_{0}^{1}(\Omega )$:%
\begin{multline*}
\epsilon ^{2}\int_{\Omega }A_{11}\nabla _{X_1}(u_{\epsilon
}-\sum_{k=0}^{d/2}\epsilon ^{2k}u_{2k})\cdot \nabla _{X_1}\varphi
dx+\int_{\Omega }A_{22}\nabla_{X_2}(u_{\epsilon }-\sum_{k=0}^{d/2}\epsilon
^{2k}u_{2k})\cdot \nabla _{X_2}\varphi dx \\
=\int_{\Omega }f\varphi dx-\int_{\Omega }A_{22}\nabla _{X_2}u_{0}\cdot \nabla
_{X_2}\varphi dx-\sum_{k=1}^{d/2}\epsilon ^{2k}\int_{\Omega }(A_{11}\nabla
_{X_1}u_{2k-2}\cdot \nabla _{X_1}\varphi +A_{22}\nabla _{X_2}u_{2k}\cdot \nabla
_{X_2}\varphi )dx \\
-\epsilon ^{d+2}\int_{\Omega }A_{11}\nabla _{X_1}u_{d}\cdot
\nabla _{X_1}\varphi dx.
\end{multline*}%
Now, by using $(\ref{u0-bis}),$ $(\ref{u1-bis}),$ (\ref{uk-bis}) we obtain:
\begin{equation*}
\epsilon ^{2}\int_{\Omega }\nabla _{X_1}(u_{\epsilon
}-\sum_{k=0}^{d/2}\epsilon ^{2k}u_{2k})\cdot \nabla _{X_1}\varphi
dx+\int_{\Omega }\nabla _{X_2}(u_{\epsilon }-\sum_{k=0}^{d/2}\epsilon
^{2k}u_{2k})\cdot \nabla _{X_2}\varphi dx=-\epsilon ^{d+2}\int_{\Omega
} A_{11}\nabla _{X_1}u_{d}\cdot \nabla _{X_1}\varphi dx.
\end{equation*}%
Since $f\in H_{0}^{2d+1}(\Omega ;\omega _{1})$ then according to Corollary 
\ref{coro:regularity} the function $(u_{\epsilon }-\sum_{k=0}^{d}\epsilon
^{2k}u_{2k})$ belongs to $ H_{0}^{1}(\Omega ),$ so testing with it in the above
equality, and using assumption (\ref{hypA1}) we derive%
\begin{multline*}
\lambda \epsilon ^{2}\left\Vert \nabla _{X_1}(u_{\epsilon }-\sum_{k=0}^{d/2}\epsilon
^{2k}u_{2k})\right\Vert _{L^{2}(\Omega )}^{2}+\left\Vert \nabla
_{X_2}(u_{\epsilon }-\sum_{k=0}^{d/2}\epsilon ^{2k}u_{2k})\right\Vert
_{L^{2}(\Omega )}^{2} \\
\leq \epsilon ^{d+2}\left\vert \int_{\Omega } A_{11}\nabla
_{X_1}u_{d}\cdot \nabla _{X_1}(u_{\epsilon }-\sum_{k=0}^{d/2}\epsilon
^{2k}u_{2k})dx\right\vert .
\end{multline*} 
\begin{equation}
\label{test}
\end{equation} 
By using Cauchy-Schwartz inequality  and Young's inequality we derive%
\begin{multline*}
\epsilon ^{d+2}\left\vert \int_{\Omega }A_{11}\nabla _{X_1}u_{d}\cdot \nabla
_{X_1}(u_{\epsilon }-\sum_{k=0}^{d/2}\epsilon ^{2k}u_{2k})dx\right\vert \leq
\frac{1}{2\lambda}\epsilon ^{2d+2}\Vert A_{11} \Vert _{\infty}^2\left\Vert \nabla _{X_1}u_{d}\right\Vert _{L^{2}(\Omega)}^2  \\ +\frac{\lambda}{2} \epsilon^2\Vert\nabla _{X_1} u_{\epsilon }-\sum_{k=0}^{d/2}\epsilon ^{2k}u_{2k} \Vert_{L^2(\Omega)}^2.
\end{multline*}%
Therefore 
\begin{multline*}
\lambda \epsilon ^{2}\left\Vert \nabla _{X_1}(u_{\epsilon }-\sum_{k=0}^{d/2}\epsilon
^{2k}u_{2k})\right\Vert _{L^{2}(\Omega )}^{2}+\left\Vert \nabla
_{X_2}(u_{\epsilon }-\sum_{k=0}^{d/2}\epsilon ^{2k}u_{2k})\right\Vert
_{L^{2}(\Omega )}^{2} \\
\leq \frac{1}{2\lambda}\epsilon ^{2d+2}\Vert A_{11} \Vert _{\infty}\left\Vert \nabla _{X_1}u_{d}\right\Vert _{L^{2}(\Omega)}.
\end{multline*}
Whence, we obtain that for every $\epsilon \in (0,1]$%
\begin{equation*}
\epsilon ^{2}\left\Vert \nabla _{1}(u_{\epsilon }-\sum_{k=0}^{d/2}\epsilon
^{2k}u_{2k})\right\Vert _{L^{2}(\Omega )}^{2}+\left\Vert \partial
_{2}(u_{\epsilon }-\sum_{k=0}^{d/2}\epsilon ^{2k}u_{2k})\right\Vert
_{L^{2}(\Omega )}^{2}\leq C_{d,f,A,\lambda}\epsilon ^{2d+2}.
\end{equation*}%
Whence 
\begin{equation*}
\left\Vert \nabla _{X_2}(u_{\epsilon }-\sum_{k=0}^{d/2}\epsilon
^{2k}u_{2k})\right\Vert _{L^{2}(\Omega )}\leq  C_{d,f,A,\lambda}\epsilon ^{d+1},
\end{equation*}%
and 
\begin{equation*}
\left\Vert \nabla _{X_1}(u_{\epsilon }-\sum_{k=0}^{d/2}\epsilon
^{2k}u_{2k})\right\Vert _{L^{2}(\Omega )} C_{d,f,A,\lambda}\epsilon ^{d}.
\end{equation*}
\section{The general case}
In this section, we deal with (\ref{u0}), (\ref{u1}), and (\ref{uk}) with general $A$ with the following regularity assumption
\begin{equation}
D_{X_{1}}^{\alpha }A_{12}\in L^{\infty }(\Omega ), \text{ }D_{X_{1}}^{\alpha }A_{21}\in L^{\infty }(\Omega ) \text{, for every } \alpha \in 
%TCIMACRO{\U{2115} }%
%BeginExpansion
\mathbb{N}
%EndExpansion
^{q} \text{ with } \left\vert \alpha \right\vert,
\leq d-1.  \label{regularity -A12}
\end{equation}
 and we suppose that $d \in \mathbb{N}^{*}$ not necessarily even. 
\begin{lemma}\label{lem:regularity2}
Suppose that assumptions of Lemma \ref{lem:regularity} hold then we have 
 $$ D^1_{X_1}D^1_{X_2} w_{g} \in H^{m-1}_0(\Omega,\omega_1).$$
\end{lemma}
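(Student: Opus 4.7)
The plan is to mimic the two-step structure of the proof of Lemma~\ref{lem:regularity}: first establish the conclusion when $g$ lies in the dense subspace $H_0^m(\omega_1)\otimes H_0^m(\omega_2)$, and then transfer it to arbitrary $g\in H_0^m(\Omega;\omega_1)$ through a linear a priori estimate stable under density.

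For the tensor case, I would start with $g=g_1\otimes g_2$ where $g_1\in H_0^m(\omega_1)$ and $g_2\in H_0^m(\omega_2)$. The identification $w_g=g_1\otimes w_{2,g_2}$ from Lemma~\ref{tensor} gives
\[
\partial_i\partial_j w_g=\partial_i g_1\otimes \partial_j w_{2,g_2},\qquad 1\le i\le q,\ q+1\le j\le N.
\]
Since $\partial_i g_1\in H_0^{m-1}(\omega_1)$ and $\partial_j w_{2,g_2}\in L^2(\omega_2)$, this tensor product lies in $H_0^{m-1}(\Omega;\omega_1)$. The extension to finite sums $g=\sum_k g_{1,k}\otimes g_{2,k}$ is immediate by linearity.

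The crucial step is a uniform estimate controlling $\|D_{X_1}^\alpha\partial_i\partial_j w_g\|_{L^2(\Omega)}$ by $\|D_{X_1}^\alpha\partial_i g\|_{L^2(\Omega)}$ for every $|\alpha|\le m-1$. Because (\ref{A22}) makes $A_{22}$ independent of $X_1$, differentiation in $X_1$ commutes with the bilinear form of (\ref{regul1}), producing
\[
\int_\Omega A_{22}\nabla_{X_2}(D_{X_1}^\alpha\partial_i w_g)\cdot\nabla_{X_2}\varphi\, dx=\int_\Omega D_{X_1}^\alpha\partial_i g\,\varphi\, dx,\qquad \forall\varphi\in H_0^1(\Omega;\omega_2).
\]
I would make this rigorous via a difference-quotient argument in $X_1$, legitimate because translations in $X_1$ leave $A_{22}$ invariant and preserve $H_0^1(\Omega;\omega_1)$. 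Testing the resulting equation against $D_{X_1}^\alpha\partial_i w_g$ itself and invoking (\ref{hypA1}) together with (\ref{sob}) then gives
\[
\|\nabla_{X_2}D_{X_1}^\alpha\partial_i w_g\|_{L^2(\Omega)}\le \frac{C_{\omega_2}}{\lambda}\|D_{X_1}^\alpha\partial_i g\|_{L^2(\Omega)},
\]
which in particular controls $\|D_{X_1}^\alpha\partial_i\partial_j w_g\|_{L^2(\Omega)}$ as required.

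To conclude, I would invoke the density Lemmas~\ref{density1} and \ref{density2} to pick $g_n\in H_0^m(\omega_1)\otimes H_0^m(\omega_2)$ with $g_n\to g$ in $H_0^m(\Omega;\omega_1)$. By linearity applied to $g_n-g_p$, the estimate makes $(\partial_i\partial_j w_{g_n})$ Cauchy in $H_0^{m-1}(\Omega;\omega_1)$. Combined with $w_{g_n}\to w_g$ in $H_0^1(\Omega;\omega_2)$ already guaranteed by Lemma~\ref{lem:regularity}, distributional identification of limits yields $\partial_i\partial_j w_g\in H_0^{m-1}(\Omega;\omega_1)$. The main obstacle I anticipate is the rigorous justification of differentiating (\ref{regul1}) in $X_1$ for general $g$; once the difference-quotient reduction is cleanly handled, the remainder is a transcription of the scheme already deployed in Lemma~\ref{lem:regularity}.
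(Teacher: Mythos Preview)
Your proposal is correct and follows the same two-step scheme (tensor products, then density) as the paper. The only difference is that the paper dispenses with the difference-quotient detour altogether: it simply observes that for $|\beta|\le m-1$ one can write $D_{X_1}^\beta D_{X_1}^1 = D_{X_1}^\alpha$ with $|\alpha|\le m$, so that the required bound $\|D_{X_1}^\beta D_{X_1}^1 D_{X_2}^1 w_g\|_{L^2}\le C_{\omega_2}\|D_{X_1}^\alpha g\|_{L^2}$ is nothing but a component of the estimate (\ref{nablaDX1}) already established in Lemma~\ref{tensor}. Your re-derivation of this estimate via differentiation of (\ref{regul1}) and difference quotients is sound but unnecessary; in the tensor case the identity (\ref{faible-tensor}) for $D_{X_1}^\alpha w_g$ is immediate from the explicit form $w_g=\sum_i g_{1,i}\otimes w_{2,i}$, so no limiting procedure is needed there.
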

\begin{proof}
Let $\beta \in \mathbb{N}^q$ be a multi-index such that $\vert \beta \vert \leq m-1$ there exits $\alpha \in \mathbb{N}^q$ such that $$\vert \alpha \vert =\vert \beta \vert +1 \text{ and } D^{\alpha}_{X_1}=D^{\beta}_{X_1}D^{1}_{X_1}.$$
Let $g \in H^m_0(\omega_1) \otimes  H^m_0(\omega_2)$, by using (\ref{nablaDX1}) we get 
\begin{equation*}
\left\Vert  D_{X_{1}}^{\beta } D^1_{X_1} D^1_{X_2}w_{g}\right\Vert _{L^{2}(\Omega )}  \leq C_{\omega _{2}}\left\Vert D_{X_{1}}^{\alpha }g\right\Vert _{L^{2}(\Omega
)}.
\end{equation*}
It is clear that $D^1_{X_1} D^1_{X_2}w_{g} \in H^{m-1}_0(\Omega, \omega_1)$, by using a density argument as in step 2 of the proof of Lemma \ref{lem:regularity} we obtain the desired result.
\end{proof}
We have the following regularity result
\begin{proposition}\label{prop-regul}
Suppose that assumptions of Lemma \ref{lem:regularity} are satisfied, then the system (\ref{u0}), (\ref{u1}), and (\ref{uk}), $2 \leq k\leq d$, have a unique solution in $H^1_0(\Omega, \omega_2)^{d+1}$. Moreover, we have $u_k \in H^1_0(\Omega) \cap H^{d-k+1}_0(\Omega, \omega_1)$, for every $0 \leq k \leq d $.
\end{proposition}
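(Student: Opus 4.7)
The plan is to proceed by strong induction on $k$, along the lines of Corollary \ref{coro:regularity}, with Lemma \ref{lem:regularity2} providing the new tool needed to control the mixed derivatives generated by the off-diagonal blocks. For existence and uniqueness in $H^1_0(\Omega, \omega_2)$, I would argue inductively: once $u_0, \ldots, u_{k-1}$ are in hand with the claimed regularity, the right-hand sides of (\ref{u0}), (\ref{u1}), and (\ref{uk}) define bounded linear functionals on $H^1_0(\Omega, \omega_2)$, and Lax-Milgram applied to the bilinear form associated with $A_{22}$ yields a unique $u_k$ there.

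For the regularity claim $u_k \in H^1_0(\Omega) \cap H^{d-k+1}_0(\Omega, \omega_1)$, the base case is immediate: $f \in H^{d+1}_0(\Omega, \omega_1)$, and Lemma \ref{lem:regularity} with $m = d+1$ gives $u_0 \in H^{d+1}_0(\Omega, \omega_1) \cap H^1_0(\Omega)$, while Lemma \ref{lem:regularity2} supplies the auxiliary estimate $D^1_{X_1} D^1_{X_2} u_0 \in H^d_0(\Omega, \omega_1)$. For the inductive step, assume the regularity and the mixed-derivative estimate hold for all $j < k$; I would then show that the right-hand side $g_k$ of the equation for $u_k$ lies in $H^{d-k+1}_0(\Omega, \omega_1)$ and apply Lemma \ref{lem:regularity} with $m = d-k+1$. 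The term $\nabla_{X_1} \cdot (A_{11} \nabla_{X_1} u_{k-2})$ is treated as in Corollary \ref{coro:regularity} using (\ref{regularity -A11}). The term $\nabla_{X_1} \cdot (A_{12} \nabla_{X_2} u_{k-1})$ is expanded by Leibniz, using (\ref{regularity -A12}) for the $L^\infty$ bounds on $X_1$-derivatives of $A_{12}$ and the inductive mixed-derivative control to bound the factors $D^\alpha_{X_1} D^1_{X_2} u_{k-1}$ in $L^2$. A further application of Lemma \ref{lem:regularity2} with $m = d-k+1$ then closes the induction by giving $D^1_{X_1} D^1_{X_2} u_k \in H^{d-k}_0(\Omega, \omega_1)$.

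The main obstacle I anticipate is the cross term $\nabla_{X_2} \cdot (A_{21} \nabla_{X_1} u_{k-1})$: unlike the other contributions its outer derivative falls in the ``good'' direction $X_2$, and a priori it lives only in $H^{-1}$ with respect to $X_2$. Showing it belongs to $H^{d-k+1}_0(\Omega, \omega_1)$ will require carefully applying Leibniz and invoking the first-order cross-regularity of $A$ from (\ref{hypAd1}), together with the mixed-derivative estimates on $u_{k-1}$ inherited from earlier stages of the induction. A secondary subtlety is verifying that $g_k$ actually lies in the homogeneous space $H^{d-k+1}_0(\Omega, \omega_1)$ and not merely its inhomogeneous analogue; this should follow from the zero-trace property of the $u_j$ for $j < k$ by a density argument of the same flavour as Step 2 of the proof of Lemma \ref{lem:regularity}.
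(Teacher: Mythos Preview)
Your proposal is essentially the paper's own argument: an induction on $k$ that tracks both $u_k\in H^{d-k+1}_0(\Omega,\omega_1)\cap H^1_0(\Omega)$ and the auxiliary mixed-derivative estimate $D^1_{X_1}D^1_{X_2}u_k\in H^{d-k}_0(\Omega,\omega_1)$, invoking Lemma~\ref{lem:regularity} for the former and Lemma~\ref{lem:regularity2} for the latter at each step. The paper's proof is in fact terser than yours: it simply asserts that (\ref{regularity -A11}) and (\ref{regularity -A12}) place the right-hand side of (\ref{uk}) in $H^{d-k+1}_0(\Omega,\omega_1)$ without singling out the $\nabla_{X_2}\!\cdot(A_{21}\nabla_{X_1}u_{k-1})$ term or invoking (\ref{hypAd1}); your flagging of that term, and of the trace issue for the homogeneous space, goes beyond what the paper spells out rather than diverging from it.
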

\begin{proof}
\ \\
1) The affirmation $u_0 \in H^1_0(\Omega) \cap H^{d+1}_0(\Omega,\omega_1)$ follows as in Corollary \ref{coro:regularity}.\\
2) For $u_1$: In fact Lemma \ref{lem:regularity2} shows that 
$$ D^1_{X_1}D^1_{X_2} w_{g} \in H^{d}_0(\Omega,\omega_1).$$
Therefore, by using assumption (\ref{regularity -A12}) we show that the right hand side of (\ref{u1}) belongs to $H^{d}_0(\Omega,\omega_1)$, whence by using Lemmas \ref{lem:regularity},\ref{lem:regularity2} we get immediately $$u_1 \in H^1_0(\Omega) \cap H^{d}_0(\Omega, \omega_1), \text{ and } D^1_{X_1}D^1_{X_2} u_{1} \in H^{d-1}_0(\Omega,\omega_1).$$
3) For $u_k$: Let $2 \leq k \leq d$, suppose that $u_{k-1} \in H^{d-k+2}_0(\Omega, \omega_1)$, $u_{k-2} \in H^{d-k+3}_0(\Omega, \omega_1)$ and $ D^1_{X_1}D^1_{X_2} u_{k-1} \in H^{d-k+1}_0(\Omega,\omega_1)$, then assumptions (\ref{regularity -A11}), (\ref{regularity -A12})show that the right hand side of (\ref{uk}) belongs to $H^{d-k+1}_0(\Omega,\omega_1)$, whence by using Lemmas \ref{lem:regularity},\ref{lem:regularity2} we get immediately
$$u_k \in H^1_0(\Omega) \cap H^{d-k+1}_0(\Omega, \omega_1), \text{ and } D^1_{X_1}D^1_{X_2} u_{k} \in H^{d-k}_0(\Omega,\omega_1).$$
Finally, by induction we get the suspected result.
\end{proof}
Now, we are ready to give the main theorem of this Section .
\begin{theorem}\label{thm-2}
Assume that (\ref{hypA1}), (\ref{hypA2})), (\ref{A22}),  (\ref{regularity -A11}), (\ref{f_Hm0}),and (\ref{regularity -A12}) are satisfied. There exit $u_0$, ...,$u_d$ in $H^1_0(\Omega)$ solution to the elliptic system (\ref{u0}), \ref{u1}), (\ref{uk})such that 
every $\epsilon \in (0,1]$ 
\begin{equation*}
\left\Vert \nabla _{X_{2}}(u_{\epsilon }-\sum_{k=0}^{d}\epsilon
^{k}u_{k})\right\Vert _{L^{2}(\Omega )}\leq C \times \epsilon ^{d}, \text{ and } \left\Vert \nabla _{X_1}(u_{\epsilon }-\sum_{k=0}^{d}\epsilon
^{k}u_{k})\right\Vert _{L^{2}(\Omega )}\leq C \times \epsilon ^{d-1}.
\end{equation*}%
\end{theorem}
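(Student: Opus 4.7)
The plan is to run the same three-step strategy that proved Theorem~\ref{thm-1}: substitute the truncated series $\sum_{k=0}^{d}\epsilon^{k}u_{k}$ into the weak form $(\ref{u-perturbed-weak})$, use the cascade $(\ref{u0})$, $(\ref{u1})$, $(\ref{uk})$ to cancel every contribution of order $\epsilon^{j}$ with $0\le j\le d$, and test the residual equation against the error itself. The structural novelty compared with Section~\ref{sec1} is that the off-diagonal blocks $A_{12},A_{21}$ now produce surviving residuals already at order $\epsilon^{d+1}$ (and no longer only at $\epsilon^{d+2}$, as was the case for $A_{11}$ in the diagonal setting), which will cost one power of $\epsilon$ in the final rates.

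Concretely, for $\varphi\in H^{1}_{0}(\Omega)$ I would expand $\int_\Omega A_\epsilon \nabla\bigl(\sum_{k=0}^{d}\epsilon^{k}u_{k}\bigr)\cdot\nabla\varphi\,dx$ block by block and group by powers of $\epsilon$: the coefficient of $\epsilon^{j}$ for $0\le j\le d$ is precisely the weak form of $u_j$'s equation, the identity $\int(\nabla_{X_1}\cdot(A_{12}\nabla_{X_2}u_{k-1}))\varphi\,dx = -\int A_{12}\nabla_{X_2}u_{k-1}\cdot\nabla_{X_1}\varphi\,dx$ being legitimate because Proposition~\ref{prop-regul} delivers $u_k\in H^{1}_{0}(\Omega)$ and hence all the relevant vector fields in $L^{2}(\Omega)$. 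After cancellation, setting $v_d := u_\epsilon-\sum_{k=0}^{d}\epsilon^{k}u_{k}\in H^{1}_{0}(\Omega)$, only two families of residuals survive:
\begin{equation*}
\int_\Omega A_\epsilon \nabla v_d\cdot\nabla\varphi\,dx = -\epsilon^{d+1}R_1(\varphi)-\epsilon^{d+2}R_2(\varphi),
\end{equation*}
with $R_2(\varphi):=\int_\Omega A_{11}\nabla_{X_1}u_d\cdot\nabla_{X_1}\varphi\,dx$ and
\begin{equation*}
R_1(\varphi):=\int_\Omega\bigl(A_{12}\nabla_{X_2}u_d\cdot\nabla_{X_1}\varphi+A_{21}\nabla_{X_1}u_d\cdot\nabla_{X_2}\varphi+A_{11}\nabla_{X_1}u_{d-1}\cdot\nabla_{X_1}\varphi\bigr)dx.
\end{equation*}

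Taking $\varphi=v_d$ and invoking the ellipticity $(\ref{hypA1})$ in the form $A_\epsilon\xi\cdot\xi\ge\lambda(\epsilon^{2}|\xi_1|^{2}+|\xi_2|^{2})$ yields
\begin{equation*}
\lambda\bigl(\epsilon^{2}\|\nabla_{X_1}v_d\|_{L^{2}(\Omega)}^{2}+\|\nabla_{X_2}v_d\|_{L^{2}(\Omega)}^{2}\bigr)\le\epsilon^{d+1}|R_1(v_d)|+\epsilon^{d+2}|R_2(v_d)|.
\end{equation*}
Each of the four terms on the right is then bounded by Cauchy--Schwarz (using $\|A_{ij}\|_\infty$ and the $L^{2}$-norms of $\nabla u_d,\nabla u_{d-1}$, finite by Proposition~\ref{prop-regul}) followed by Young's inequality, the key bookkeeping being to pair every occurrence of $\|\nabla_{X_1}v_d\|$ with one factor $\epsilon$ so as to match the weighted left-hand side: the $A_{12}$- and $A_{11}$-contributions of $R_1$ generate data terms $\le C\epsilon^{2d}$, while the $A_{21}$-term and $R_2$ generate $\le C\epsilon^{2d+2}$; the resulting Young fractions of $\|\nabla_{X_1}v_d\|^{2}$ and $\|\nabla_{X_2}v_d\|^{2}$ can be chosen below $\lambda/2$ and absorbed into the left. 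This produces $\epsilon^{2}\|\nabla_{X_1}v_d\|_{L^{2}(\Omega)}^{2}+\|\nabla_{X_2}v_d\|_{L^{2}(\Omega)}^{2}\le C\epsilon^{2d}$, from which both announced inequalities follow. The genuine obstacle has in fact been handled already by Proposition~\ref{prop-regul}: without the assertion $u_{d-1},u_d\in H^{1}_{0}(\Omega)$ the residual $R_1$ would fail to be a bounded functional on $H^{1}_{0}(\Omega)$ and the cascade equations could not even be tested against arbitrary $\varphi\in H^{1}_{0}(\Omega)$. The only remaining subtlety is structural, namely that the $A_{21}$-term pairs $\epsilon^{d+1}$ with the \emph{unweighted} $\|\nabla_{X_2}v_d\|$, which is precisely what caps the final rates at $\epsilon^{d}$ and $\epsilon^{d-1}$, one power worse than in Theorem~\ref{thm-1}.
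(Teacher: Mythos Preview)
Your proof is correct and follows essentially the same route as the paper: expand $\int_\Omega A_\epsilon\nabla v_d\cdot\nabla\varphi$, use the cascade to cancel orders $0,\dots,d$, test with $v_d$, apply the scaled ellipticity $A_\epsilon\xi\cdot\xi\ge\lambda(\epsilon^2|\xi_1|^2+|\xi_2|^2)$, and absorb via Young. The residual you write down matches the paper's $(\ref{eq-long-bis})$ exactly.

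There is one genuine methodological difference worth noting. For the $A_{12}$-residual
\[
I_3=-\epsilon^{d+1}\int_\Omega A_{12}\nabla_{X_2}u_d\cdot\nabla_{X_1}v_d\,dx,
\]
you split $\epsilon^{d+1}=\epsilon^d\cdot\epsilon$, pair the single $\epsilon$ with $\|\nabla_{X_1}v_d\|$, and obtain a data term of order $\epsilon^{2d}$. The paper instead invokes assumption $(\ref{hypAd1})$ to integrate by parts and swap the $X_1$- and $X_2$-derivatives, converting $I_3$ into a sum of terms involving only $\nabla_{X_2}v_d$ and $v_d$ itself (hence, via Poincar\'e, again $\nabla_{X_2}v_d$); this yields the sharper intermediate bound $|I_3|\le C\epsilon^{2d+2}+\tfrac{\lambda}{4}\|\nabla_{X_2}v_d\|^2$. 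Since the $I_2$ term $\epsilon^{d+1}\int A_{11}\nabla_{X_1}u_{d-1}\cdot\nabla_{X_1}v_d$ forces $\epsilon^{2d}$ in either approach, both arrive at the same final rates; your route is shorter and does not call on $(\ref{hypAd1})$ at this stage (it is still used implicitly through Proposition~\ref{prop-regul} and Theorem~\ref{thm-tauxconvergence}).

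One small slip in your closing commentary: the term that actually caps the rate at $\epsilon^d$ is not the $A_{21}$-contribution of $R_1$ (that one gives $\epsilon^{2d+2}$, as you yourself computed) but the $A_{11}\nabla_{X_1}u_{d-1}$ term---and, in your treatment, also the $A_{12}$ term---which pair $\epsilon^{d+1}$ with the \emph{weighted} $\epsilon\|\nabla_{X_1}v_d\|$ and therefore leave only $\epsilon^d$ for the data factor. This does not affect the validity of the argument.
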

\begin{proof}
By using (\ref{u-perturbed-weak}) we compute, for every $\varphi \in
H_{0}^{1}(\Omega )$:
\begin{multline*}
\int_{\Omega } A_{\epsilon}\nabla \Big(u_{\epsilon
}-\sum_{k=0}^{d}\epsilon ^{k}u_{k}\Big)\cdot \nabla \varphi dx =\\
\epsilon ^{2}\int_{\Omega }A_{11}\nabla _{X_1}(u_{\epsilon
}-\sum_{k=0}^{d}\epsilon ^{k}u_{k})\cdot \nabla _{X_1}\varphi
dx+\int_{\Omega }A_{22}\nabla_{X_2}(u_{\epsilon }-\sum_{k=0}^{d}\epsilon
^{k}u_{k})\cdot \nabla _{X_2}\varphi dx \\
+\epsilon\int_{\Omega }A_{12}\nabla _{X_2}(u_{\epsilon
}-\sum_{k=0}^{d}\epsilon ^{k}u_{k})\cdot \nabla _{X_1}\varphi
dx+\epsilon\int_{\Omega }A_{21}\nabla_{X_1}(u_{\epsilon }-\sum_{k=0}^{d}\epsilon
^{k}u_{k})\cdot \nabla _{X_2}\varphi dx \\
=\int_{\Omega }f\varphi dx-\int_{\Omega }A_{22}\nabla _{X_2}u_{0}\cdot \nabla
_{X_2}\varphi dx-\epsilon\Big(\int_{\Omega }A_{22}\nabla _{X_2}u_{1} \cdot \nabla _{X_2} \varphi dx+\int_{\Omega } A_{12}\nabla _{X_2}u_{0} \cdot \nabla _{X_1} \varphi dx \\ +\int_{\Omega } A_{21}\nabla _{X_1}u_{0} \cdot \nabla _{X_2} \varphi dx \Big) 
-\sum_{k=2}^{d}\epsilon ^{k}\int_{\Omega }\Big(A_{11}\nabla
_{X_1}u_{k-2}\cdot \nabla _{X_1}\varphi +A_{22}\nabla _{X_2}u_{k}\cdot \nabla
_{X_2}\varphi + \\ A_{12}\nabla _{X_2}u_{k-1} \cdot \nabla _{X_1} \varphi + A_{21}\nabla _{X_1}u_{k-1} \cdot \nabla _{X_2} \varphi  \Big)dx
-\epsilon ^{d+2}\int_{\Omega }A_{11}\nabla _{X_1}u_{d}\cdot
\nabla _{X_1}\varphi dx \\
-\epsilon^{d+1}\int_{\Omega }A_{11}\nabla _{X_1}u_{d-1}\cdot \nabla _{X_1}\varphi dx -\epsilon^{d+1}\int_{\Omega }A_{12}\nabla_{X_2} u_d \cdot \nabla_{X_1} \varphi dx - \epsilon^{d+1}\int_{\Omega }A_{21}\nabla_{X_1} u_d \cdot \nabla_{X_2} \varphi dx.
\end{multline*}%
\begin{equation}\label{eq-long}
\end{equation}
We combine (\ref{u0}), (\ref{u1}), (\ref{uk}) with (\ref{eq-long})we get 
\begin{multline*}
\int_{\Omega } A_{\epsilon}\nabla \Big(u_{\epsilon
}-\sum_{k=0}^{d}\epsilon ^{k}u_{k}\Big)\cdot \nabla \varphi dx =
-\epsilon ^{d+2}\int_{\Omega }A_{11}\nabla _{X_1}u_{d}\cdot
\nabla _{X_1}\varphi dx \\
-\epsilon^{d+1}\int_{\Omega }A_{11}\nabla _{X_1}u_{d-1}\cdot \nabla _{X_1}\varphi dx -\epsilon^{d+1}\int_{\Omega }A_{12}\nabla_{X_2} u_d \cdot \nabla_{X_1} \varphi dx - \epsilon^{d+1}\int_{\Omega }A_{21}\nabla_{X_1} u_d \cdot \nabla_{X_2} \varphi dx.
\end{multline*}
\begin{equation}\label{eq-long-bis}
\end{equation}
The function $u_{\epsilon
}-\sum_{k=0}^{d}\epsilon ^{k}u_{k}$ belongs to $H^1_0(\Omega)$, thanks to Proposition \ref{prop-regul}, so testing with the above function in (\ref{eq-long-bis}), and using the ellipticity assumption (\ref{hypA1}) we get 
\begin{multline*}
\lambda \epsilon ^{2}\left\Vert \nabla _{X_1}(u_{\epsilon }-\sum_{k=0}^{d}\epsilon
^{k}u_{k})\right\Vert _{L^{2}(\Omega )}^{2}+ \lambda\left\Vert \nabla
_{X_2}(u_{\epsilon }-\sum_{k=0}^{d}\epsilon ^{k}u_{k})\right\Vert
_{L^{2}(\Omega )}^{2} 
\leq \\ \underbrace{-\epsilon ^{d+2}\int_{\Omega }A_{11}\nabla _{X_1}u_{d}\cdot
\nabla _{X_1}\Big(u_{\epsilon
}-\sum_{k=0}^{d}\epsilon ^{k}u_{k}\Big) dx}_{=I_1}
\underbrace{-\epsilon^{d+1}\int_{\Omega }A_{11}\nabla _{X_1}u_{d-1}\cdot \nabla _{X_1}\Big(u_{\epsilon
}-\sum_{k=0}^{d}\epsilon ^{k}u_{k}\Big) dx }_{=I_2} \\ \underbrace{-\epsilon^{d+1}\int_{\Omega }A_{12}\nabla_{X_2} u_d \cdot \nabla_{X_1} \Big(u_{\epsilon
}-\sum_{k=0}^{d}\epsilon ^{k}u_{k}\Big) dx}_{=I_3} - \underbrace{\epsilon^{d+1}\int_{\Omega }A_{21}\nabla_{X_1} u_d \cdot \nabla_{X_2} \Big(u_{\epsilon
}-\sum_{k=0}^{d}\epsilon ^{k}u_{k}\Big) dx}_{=I_4}.
\end{multline*} 
\begin{equation}\label{4-integral}
\end{equation}
By using Cauchy-Schwarz and Young's inequalities and assumption (\ref{hypA2}) we obtain 
\begin{equation}\label{I1}
I_1 \leq \frac{2}{\lambda}\epsilon ^{2d+2}\Vert A_{11} \Vert _{\infty}^2\left\Vert \nabla _{X_1}u_{d}\right\Vert _{L^{2}(\Omega)}^2  +\frac{\lambda}{4} \epsilon^2\Vert\nabla _{X_1}\Big( u_{\epsilon }-\sum_{k=0}^{d}\epsilon ^{k}u_{k}\Big) \Vert_{L^2(\Omega)}^2.
\end{equation}%
Similarly we derive
\begin{equation}\label{I2}
I_2 \leq \frac{2}{\lambda}\epsilon ^{2d}\Vert A_{11} \Vert _{\infty}^2\left\Vert \nabla _{X_1}u_{d-1}\right\Vert _{L^{2}(\Omega)}^2  +\frac{\lambda}{4} \epsilon^2\Vert\nabla _{X_1}\Big( u_{\epsilon }-\sum_{k=0}^{d}\epsilon ^{k}u_{k}\Big) \Vert_{L^2(\Omega)}^2,
\end{equation}%
and 
\begin{equation}\label{I4}
I_4 \leq \frac{2}{\lambda}\epsilon ^{2d+2}\Vert A_{21} \Vert _{\infty}^2\left\Vert \nabla _{X_1}u_{d}\right\Vert _{L^{2}(\Omega)}^2  +\frac{\lambda}{4} \Vert\nabla _{X_2}\Big( u_{\epsilon }-\sum_{k=0}^{d}\epsilon ^{k}u_{k}\Big) \Vert_{L^2(\Omega)}^2.
\end{equation}%

Now, we will estimate $I_3$:  Since $u_{\epsilon}-\sum_{k=0}^{d}\epsilon ^{k}u_{k} \in H_{0}^{1}(\Omega )$ and $\partial _{x_{i}}a_{ij}\in
L^{\infty }(\Omega )$, $\partial _{x_{j}}a_{ij}\in L^{\infty }(\Omega )$ for 
$i=1,...,q$ and $j=q+1,...,N,$ (thanks to assumption (\ref{hypAd1})) then we get by a direct density argument
that for $i=1,...,q$ and $j=q+1,...,N$: $$\ \partial _{x_{k}}(a_{ij}
(u_{\epsilon}-\sum_{k=0}^{d}\epsilon ^{k}u_{k}))\in L^{2}(\Omega ),$$ and%
\begin{equation*}
\partial _{x_{k}}(a_{ij}(u_{\epsilon}-\sum_{k=0}^{d}\epsilon ^{k}u_{k} ))=(u_{\epsilon}-\sum_{k=0}^{d}\epsilon ^{k}u_{k} )\partial _{x_{k}}a_{ij}+a_{ij}\partial _{x_{k}}(u_{\epsilon}-\sum_{k=0}^{d}\epsilon ^{k}u_{k} ), \text{ for }k=i,j.
\end{equation*}%
Whence 
\begin{multline*}
I_3 =-\epsilon^{d+1}
\sum_{i=1}^{q}\sum_{j=q+1}^{N}\int_{\Omega }a_{ij}\partial
_{x_{j}}u_{d}\partial _{x_{i}}(u_{\epsilon}-\sum_{k=0}^{d}\epsilon ^{k}u_{k})dx 
\\=-\epsilon^{d+1} \sum_{i=1}^{q}\sum_{j=q+1}^{N}\int_{\Omega }\partial
_{x_{i}}(a_{ij}(u_{\epsilon}-\sum_{k=0}^{d}\epsilon ^{k}u_{k}))\partial _{x_{j}}u_{d}dx 
+ \\ \epsilon^{d+1} \sum_{i=1}^{q}\sum_{j=q+1}^{N}\int_{\Omega} (u_{\epsilon}-\sum_{k=0}^{d}\epsilon ^{k}u_{k})\partial _{x_{i}}a_{ij}\partial _{x_{j}}u_{d}dx \\
=-\epsilon^{d+1} \sum_{i=1}^{q}\sum_{j=q+1}^{N}\int_{\Omega }\partial
_{x_{j}}(a_{ij}(u_{\epsilon}-\sum_{k=0}^{d}\epsilon ^{k}u_{k}))\partial _{x_{i}}u_{d}dx 
\\+\epsilon^{d+1} \sum_{i=1}^{q}\sum_{j=q+1}^{N}\int_{\Omega }(u_{\epsilon}-\sum_{k=0}^{d}\epsilon ^{k}u_{k})\partial _{x_{i}}a_{ij}\partial _{x_{j}}u_{d}dx,
\end{multline*}%
where we have used $$\int_{\Omega }\partial _{x_{i}}(a_{ij}(u_{\epsilon}-\sum_{k=0}^{d}\epsilon ^{k}u_{k}))\partial _{x_{j}}u_{d}dx=\int_{\Omega }\partial
_{x_{j}}(a_{ij}(u_{\epsilon}-\sum_{k=0}^{d}\epsilon ^{k}u_{k}))\partial _{x_{i}}u_{d}dx,$$ which
follows by a simple density argument (recall that $u_{d}\in H_{0}^{1}(\Omega
)$). Therefore%
\begin{multline*}
I_3=-\epsilon^{d+1}
\sum_{i=1}^{q}\sum_{j=q+1}^{N}\int_{\Omega }(u_{\epsilon}-\sum_{k=0}^{d}\epsilon ^{k}u_{k})\partial _{x_{j}}a_{ij}\partial _{x_{i}}u_{d}dx
 \\
-\epsilon^{d+1} \sum_{i=1}^{q}\sum_{j=q+1}^{N}\int_{\Omega }a_{ij}\partial
_{x_{j}}(u_{\epsilon}-\sum_{k=0}^{d}\epsilon ^{k}u_{k})\partial _{x_{i}}u_{d}dx \\
+\epsilon^{d+1} \sum_{i=1}^{q}\sum_{j=q+1}^{N}\int_{\Omega }(u_{\epsilon}-\sum_{k=0}^{d}\epsilon ^{k}u_{k})\partial _{x_{i}}a_{ij}\partial _{x_{j}}u_{d}dx.  \notag
\end{multline*}
By Young's inequality and Poincar\'{e}'s inequality (\ref{sob}), we obtain 
\begin{equation}\label{I3}
I_3 \leq \epsilon ^{2d+2} C_{A,\lambda}(\left\Vert \nabla _{X_1}u_{d}\right\Vert _{L^{2}(\Omega)}^2+\left\Vert \nabla _{X_2}u_{d}\right\Vert _{L^{2}(\Omega)}^2)  +\frac{\lambda}{4} \Vert\nabla _{X_2}\Big( u_{\epsilon }-\sum_{k=0}^{d}\epsilon ^{k}u_{k}\Big) \Vert_{L^2(\Omega)}^2.
\end{equation}%
Finally, we combine (\ref{4-integral}) with (\ref{I1}), (\ref{I2}), (\ref{I4}), (\ref{I3}) to get 
\begin{equation}
\frac{\lambda}{4} \epsilon ^{2}\left\Vert \nabla _{X_1}(u_{\epsilon }-\sum_{k=0}^{d}\epsilon^{k}u_{k})\right\Vert _{L^{2}(\Omega )}^{2}+ \frac{\lambda}{4}\left\Vert \nabla_{X_2}(u_{\epsilon }-\sum_{k=0}^{d}\epsilon^{k}u_{k})\right\Vert_{L^{2}(\Omega )}^{2} 
\leq C_{f,d,A,\lambda,\Omega} \epsilon^{2d}.
\end{equation}
Therefore, 
\begin{equation*}
\left\Vert \nabla _{X_2}(u_{\epsilon }-\sum_{k=0}^{d}\epsilon
^{k}u_{k})\right\Vert _{L^{2}(\Omega )}\leq  C_{d,f,A,\lambda,\Omega}\epsilon ^{d},
\end{equation*}%
and 
\begin{equation*}
\left\Vert \nabla _{X_1}(u_{\epsilon }-\sum_{k=0}^{d}\epsilon
^{k}u_{k})\right\Vert _{L^{2}(\Omega )} C_{d,f,A,\lambda,\Omega}\epsilon ^{d-1}.
\end{equation*}
And the proof of the theorem is finished.
\end{proof}

 \appendix
\section{Density Lemmas}

\begin{lemma}
\label{density1} $H_{0}^{m}(\omega _{1})\otimes H_{0}^{m}(\omega _{2})$ is
dense in $H_{0}^{m}(\Omega ).$
\end{lemma}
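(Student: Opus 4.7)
The plan is to approximate an arbitrary test function $\varphi \in \mathcal{D}(\Omega)$ in the $H^m$-norm by finite sums of tensor products drawn from $\mathcal{D}(\omega_1) \otimes \mathcal{D}(\omega_2)$. Since $\mathcal{D}(\Omega)$ is by definition dense in $H_0^m(\Omega)$, and since $\mathcal{D}(\omega_i) \subset H_0^m(\omega_i)$, this will suffice to conclude.

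Fix $\varphi \in \mathcal{D}(\Omega)$ with compact support $K \subset \omega_1 \times \omega_2$. By compactness one finds compact sets $K_i \subset \omega_i$ with $K \subset K_1 \times K_2$, and open cubes $Q_i$ with $K_i \subset Q_i \subset \overline{Q_i} \subset \omega_i$. Choose cut-off functions $\chi_i \in \mathcal{D}(\omega_i)$ such that $\chi_i \equiv 1$ on $K_i$ and $\mathrm{supp}(\chi_i) \subset Q_i$. By construction, $\chi_1(X_1)\chi_2(X_2)\varphi(X_1,X_2) = \varphi(X_1,X_2)$ on all of $\Omega$.

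The core step is to Fourier-expand $\varphi$ on the product box. Since $\varphi$ extended by zero vanishes in a neighbourhood of $\partial(Q_1\times Q_2)$, it can be regarded as a $C^\infty$ function on the torus obtained by periodic identification of $\overline{Q_1}\times\overline{Q_2}$, and its Fourier series
\begin{equation*}
\varphi(X_1,X_2) = \sum_{k \in \mathbb{Z}^q,\; l \in \mathbb{Z}^{N-q}} c_{k,l}\, e_k(X_1)\, e_l(X_2),
\end{equation*}
with $(e_k)$ and $(e_l)$ the standard exponential bases on $Q_1$ and $Q_2$, converges to $\varphi$ in every Sobolev norm $H^s(Q_1\times Q_2)$ by smoothness. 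Let $\varphi_M$ denote the truncation with $|k|,|l|\le M$, and set $\tilde\varphi_M := \chi_1(X_1)\chi_2(X_2)\varphi_M(X_1,X_2)$. Each $\tilde\varphi_M$ is a finite sum of tensor products $(\chi_1 e_k)\otimes(\chi_2 e_l)$ whose factors lie in $\mathcal{D}(\omega_i) \subset H_0^m(\omega_i)$, so $\tilde\varphi_M \in H_0^m(\omega_1)\otimes H_0^m(\omega_2)$.

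It then remains to show $\tilde\varphi_M \to \chi_1\chi_2\varphi = \varphi$ in $H^m(\Omega)$, which follows from $\varphi_M \to \varphi$ in $H^m(Q_1\times Q_2)$ combined with the boundedness of the multiplication operator by $\chi_1(X_1)\chi_2(X_2) \in C^\infty_c(\Omega)$ on $H^m$. The one substantive ingredient is the $H^s$-convergence of the Fourier series of a smooth function supported away from the boundary of a periodic box, which is classical; everything else is bookkeeping with cut-off functions, and I do not expect any further obstacle.
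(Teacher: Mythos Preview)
Your argument is correct. The paper, however, proceeds much more tersely: it simply observes that $\mathcal D(\omega_1)\otimes\mathcal D(\omega_2)$ is dense in $\mathcal D(\Omega)$ (a classical fact from distribution theory), that $\mathcal D(\Omega)$ is dense in $H_0^m(\Omega)$ by definition, and that the embedding $\mathcal D(\Omega)\hookrightarrow H_0^m(\Omega)$ is continuous; chaining these three facts gives the result in one line.

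The difference is that the paper outsources the approximation step to the known density of tensor products of test functions in the $\mathcal D$-topology, while you build the approximation by hand at the $H^m$ level using Fourier series on a periodic box and cut-offs. Your route is longer but fully self-contained and avoids invoking the LF-topology machinery; the paper's route is shorter but leans on a standard black box. Both land in the same place: approximating an arbitrary $\varphi\in\mathcal D(\Omega)$ by elements of $\mathcal D(\omega_1)\otimes\mathcal D(\omega_2)$ in the $H^m$-norm.
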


\begin{proof}
\textbf{1) }The embedding $\mathcal{D}(\Omega )\rightarrow H_{0}^{m}(\Omega
) $ is continuous thanks to the inequality, 
\begin{equation*}
\left( \sum_{\left\vert \gamma \right\vert =m}\int_{\Omega }\left\vert
D^{\gamma }u\right\vert ^{2}dx\right) ^{\frac{1}{2}}\leq mes(\Omega )\sup 
_{\substack{ x\in Supp(u)  \\ \left\vert \gamma \right\vert =m}}\left\vert
D^{\gamma }u\right\vert.
\end{equation*}

$\mathcal{D}(\omega _{1})\otimes \mathcal{D}(\omega _{2})$ is dense in $%
\mathcal{D}(\Omega )$, and $\mathcal{D}(\Omega )$ is dense in $%
H_{0}^{m}(\Omega )$ then $\mathcal{D}(\omega _{1})\otimes \mathcal{D}(\omega
_{2})$ is dense in $H_{0}^{m}(\Omega ),$ i.e. $H_{0}^{m}(\omega _{1})\otimes
H_{0}^{m}(\omega _{2})$ is dense in $H_{0}^{m}(\Omega ).$
\end{proof}

\begin{lemma}
\label{density2} $H_{0}^{m}(\Omega )$ is dense in $H_{0}^{m}(\Omega ;\omega
_{1})$
\end{lemma}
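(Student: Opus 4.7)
The plan is to identify $H_0^m(\Omega;\omega_1)$ with the Bochner space $L^2(\omega_2;H_0^m(\omega_1))$ and then apply the standard density of tensor products in such spaces. The first step is to show that the map $v\mapsto (X_2\mapsto v(\cdot,X_2))$ is an isometric isomorphism between $H_0^m(\Omega;\omega_1)$ and $L^2(\omega_2;H_0^m(\omega_1))$. By Fubini,
\[
\|v\|_{H_0^m(\Omega;\omega_1)}^2 = \sum_{|\alpha|\leq m}\|D_{X_1}^\alpha v\|_{L^2(\Omega)}^2 = \int_{\omega_2}\|v(\cdot,X_2)\|_{H^m(\omega_1)}^2\,dX_2,
\]
and the requirement $v(\cdot,X_2)\in H_0^m(\omega_1)$ for a.e.\ $X_2$ places the slice in the correct subspace. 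Strong measurability of $X_2\mapsto v(\cdot,X_2)$ valued in $H_0^m(\omega_1)$ follows from separability of $H_0^m(\omega_1)$ combined with weak measurability (Pettis' theorem), the latter being a direct consequence of Fubini applied to the $L^2$ pairings with each $D_{X_1}^\alpha v$.

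The second step is to establish that $\mathcal{D}(\omega_1)\otimes\mathcal{D}(\omega_2)$ is dense in $L^2(\omega_2;H_0^m(\omega_1))$. In the Bochner space over a separable Hilbert space, simple functions of the form $\sum_i \mathbf{1}_{E_i}(X_2)\,x_i$, with $E_i\subset\omega_2$ measurable and $x_i\in H_0^m(\omega_1)$, are dense. Approximating each $\mathbf{1}_{E_i}$ in $L^2(\omega_2)$ by elements of $\mathcal{D}(\omega_2)$ and each $x_i$ in $H_0^m(\omega_1)$ by elements of $\mathcal{D}(\omega_1)$ (using the definition of $H_0^m(\omega_1)$ as the closure of $\mathcal{D}(\omega_1)$) produces an approximation in $\mathcal{D}(\omega_1)\otimes\mathcal{D}(\omega_2)$.

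To conclude, observe that $\mathcal{D}(\omega_1)\otimes\mathcal{D}(\omega_2)\subset\mathcal{D}(\Omega)\subset H_0^m(\Omega)$, and that the embedding $H_0^m(\Omega)\hookrightarrow H_0^m(\Omega;\omega_1)$ is continuous since $\sum_{|\alpha|\leq m}\|D_{X_1}^\alpha v\|_{L^2(\Omega)}^2\leq \|v\|_{H^m(\Omega)}^2$. Combining this chain of inclusions with the density obtained in the second step shows that $H_0^m(\Omega)$ is dense in $H_0^m(\Omega;\omega_1)$.

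The main obstacle is carrying out the first step rigorously, in particular verifying Bochner measurability of $X_2\mapsto v(\cdot,X_2)$ into $H_0^m(\omega_1)$; once this identification is in place, the remainder reduces to classical Bochner-space density and elementary inclusion arguments.
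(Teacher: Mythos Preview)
Your argument is correct and takes a genuinely different route from the paper. You identify $H_0^m(\Omega;\omega_1)$ with the Bochner space $L^2(\omega_2;H_0^m(\omega_1))$ and then invoke the standard density of simple functions (hence of tensor products $\mathcal{D}(\omega_1)\otimes\mathcal{D}(\omega_2)$) in Bochner spaces. The only point that requires a little care, as you note, is strong measurability of the slice map and the compatibility $(D_{X_1}^{\alpha}v)(\cdot,X_2)=D_{X_1}^{\alpha}\bigl(v(\cdot,X_2)\bigr)$ a.e., but both follow from Fubini and Pettis in the separable Hilbert setting.

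The paper instead uses an elliptic regularisation: for each $n$ it solves in $H_0^m(\Omega)$ the anisotropic problem
\[
\sum_{|\alpha|=m}\!\int_\Omega D_{X_1}^\alpha u_n\,D_{X_1}^\alpha\varphi
+\frac{1}{n^2}\!\!\sum_{\substack{|\gamma|+|\gamma'|=m\\ \gamma,\gamma'\neq 0}}\!\!\int_\Omega D_{X_1}^{\gamma}D_{X_2}^{\gamma'}u_n\,D_{X_1}^{\gamma}D_{X_2}^{\gamma'}\varphi
+\frac{1}{n^2}\!\sum_{|\beta|=m}\!\int_\Omega D_{X_2}^\beta u_n\,D_{X_2}^\beta\varphi
=\sum_{|\alpha|=m}\!\int_\Omega D_{X_1}^\alpha u\,D_{X_1}^\alpha\varphi,
\]
obtains uniform bounds, extracts weak limits, and then uses Mazur's lemma to upgrade to strong convergence in $H_0^m(\Omega;\omega_1)$, finally identifying the limit with $u$ by testing with tensor products. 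This is thematically in tune with the paper (it is itself an anisotropic singular perturbation) and is self-contained, avoiding Bochner-space machinery. Your approach, by contrast, is shorter, more functional-analytic, and in fact yields the stronger statement that $\mathcal{D}(\omega_1)\otimes\mathcal{D}(\omega_2)$ is already dense in $H_0^m(\Omega;\omega_1)$, thereby subsuming both Lemma~\ref{density1} and Lemma~\ref{density2} in one stroke.
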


\begin{proof}
Let $u\in H_{0}^{m}(\Omega ;\omega _{1})$ fixed. For every $n\in 
%TCIMACRO{\U{2115} }%
%BeginExpansion
\mathbb{N}
%EndExpansion
^{\ast }$ we denote $u_{n}$ the unique solution in $H_{0}^{m}(\Omega )$ of%
\begin{multline}
\sum_{\vert \alpha \vert=m}\int_{\Omega }D^{\alpha}_{X_1}
u_{n}\cdot D^{\alpha}_{X_1}\varphi dx+ \frac{1}{n^2}\sum_{\vert \gamma \vert+\vert \gamma'\vert=m, \text{} \gamma,\gamma' \neq 0}\int_{\Omega }D^{\gamma}_{X_1}D^{\gamma'}_{X_2}
u_{n}\cdot D^{\gamma}_{X_1}D^{\gamma'}_{X_2}\varphi dx\\+\frac{1}{n^2}\sum_{\vert \beta \vert=m}\int_{\Omega }D^{\beta}_{X_2}
u_{n}\cdot D^{\beta}_{X_2}\varphi dx=\sum_{\vert \alpha \vert=m}\int_{\Omega
}D _{X_1}^{\alpha}u\cdot D _{X_1}^{\alpha}\varphi dx,\text{ \ }\forall
\varphi \in H_{0}^{m}(\Omega ).  \label{elliptic-dordre 2m}
\end{multline}

The existence and uniqueness of $u_n$ follows from Lax-Milgram theorem. Testing with $%
u_{n}$ in (\ref{elliptic-dordre 2m}) we obtain for every multi-index $\alpha \in \mathbb{N}^q$ with $\vert \alpha\vert =m$:
\begin{equation*}
\left\Vert D _{X_1}^{\alpha}u_{n}\right\Vert _{L^{2}(\Omega )}\leq
\left\Vert D _{X_1}^{\alpha}v\right\Vert _{L^{2}(\Omega )},
\end{equation*}%
and for every multi-index $\gamma \in \mathbb{N}^q-\{0\}, \gamma'\in \mathbb{N}^{N-q}-\{0\}$ with $\vert \gamma \vert+\vert \gamma'\vert=m$:
\begin{equation*}
\frac{1}{n}\left\Vert D^{\gamma}_{X_1}D^{\gamma'}_{X_2}u_{n}\right\Vert _{L^{2}(\Omega )}\leq \left\Vert D _{X_1}^{\alpha}v\right\Vert _{L^{2}(\Omega )},
\end{equation*}
and for every multi-index $\beta \in \mathbb{N}^{N-q}$ with $\vert \beta \vert =m$:
\begin{equation*}
\frac{1}{n}\left\Vert D^{\beta}_{X_2}u_{n}\right\Vert _{L^{2}(\Omega )}\leq \left\Vert D _{X_1}^{\alpha}v\right\Vert _{L^{2}(\Omega )},
\end{equation*}
By using reflexivity of $L^{2}(\Omega )$ and continuity of the derivation operators on $%
\mathcal{D}^{\prime }(\Omega )$ we show that there exists $u_{\infty }\in
L^{2}(\Omega )$ such that $D _{X_1}^{\alpha}u_{\infty }\in L^{2}(\Omega )$ 
Fix multi-index $\alpha$, $\beta$, $\gamma$, $\gamma'$ given as above, and up to a subsequence we have   
\begin{equation}\label{mazur}
\frac{1}{n}D^{\beta}_{X_2}u_{n} \rightharpoonup 0\text{, }\frac{1}{n}D^{\gamma}_{X_1}D^{\gamma'}_{X_2}u_{n} \rightharpoonup 0%
\text{, and }D^{\alpha} _{X_1}u_{n}\rightharpoonup D^{\alpha} _{X_1}u_{\infty
}\text{ in }L^{2}(\Omega )\text{ weakly}.
\end{equation}
By using Mazur's lemma there exists a sequence $(U_n)$ of convex combinations of $(u_n)$ such that $$D^{\alpha} _{X_1}U_{n}\rightarrow D^{\alpha} _{X_1}u_{\infty
}\text{ in }L^{2}(\Omega ) \text{ strongly}.$$
By completeness of $H^m_0(\Omega,\omega_1)$ we get $u_{\infty} \in H^m_0(\Omega,\omega_1).$
Now, we will prove that $u=u_{\infty}$. Passing to the limit in (\ref{elliptic-dordre 2m}) as $n \rightarrow \infty$ by using (\ref{mazur}) we get 
 \begin{equation*}
\sum_{\vert \alpha \vert=m}\int_{\Omega }D^{\alpha}_{X_1}
u_{\infty}\cdot D^{\alpha}_{X_1}\varphi dx=\sum_{\vert \alpha \vert=m}\int_{\Omega
}D _{X_1}^{\alpha}u\cdot D _{X_1}^{\alpha}\varphi dx,\text{ \ }\forall
\varphi \in H_{0}^{m}(\Omega ).  
\end{equation*}
Take $\varphi =\varphi_1 \otimes \varphi_2$ with $\varphi_1 \in H^m_0(\omega_1), \text{and } \varphi_2 \in H^m_0(\omega_2)$, then we obtain, for a.e. $X_2 \in \omega_2:$
\begin{equation*}
\sum_{\vert \alpha \vert=m}\int_{\omega_1 }D^{\alpha}_{X_1}
u_{\infty}\cdot D^{\alpha}_{X_1}\varphi_1 dX_1=\sum_{\vert \alpha \vert=m}\int_{\omega_1
}D _{X_1}^{\alpha}u\cdot D _{X_1}^{\alpha}\varphi_1 dX_1,\text{ \ }\forall
\varphi_1 \in H_{0}^{m}(\omega_1 ).
\end{equation*}
Finally, by taking $\varphi_1=u(\cdot,X_2)-u_{\infty}(\cdot,X_2)$ in the above equation and integrating over $\omega_2$ we get 
$$ \sum_{\vert \alpha \vert=m}\Vert D^{\alpha}_{X_1}(u-u_{\infty}) \Vert _{L^2(\Omega)}^2=0, $$
and whence $u=u_{\infty}$, and $(U_n)$ converges to $u$ in $H^m_0(\Omega, \omega_1)$.
\end{proof}

\end{document}